\newcommand{\indeg}{d^\text{in}}
\newcommand{\outdeg}{d^\text{out}}
\newcommand{\new}[1]{{\color{blue} #1}}
\title{The Asymptotics of the Expected Betti Numbers of Preferential Attachment Clique Complexes}
\author{Chunyin Siu, Gennady Samorodnitsky, Christina Lee Yu and Rongyi He}
\date{}
\begin{document}

\maketitle

\begin{abstract}
The preferential attachment model is a natural and popular random graph model for a growing network that contains very well-connected ``hubs''. We study the higher-order connectivity of such a network by investigating the topological properties of its clique complex. We concentrate on the expected Betti numbers, a sequence of topological invariants of the complex related to the numbers of holes of different dimensions. We determine the asymptotic growth rates of the expected Betti numbers, and prove that the expected Betti number at dimension 1 grows linearly fast, while those at higher dimensions grow sublinearly fast. Our theoretical results are illustrated by simulations. \new{(Changes are made in this version to generalize Proposition 14 and to streamline proofs. These changes are shown in blue.)}
\end{abstract}

\section{Introduction}

From biology to the internet, real-world networks in a wide range of fields are believed to be \emph{scale-free}, in the sense that the degree distributions of these graphs obey certain power laws, often with infinite variance. \citep{albert05_cellNetworks_sclaeFree,eguluz05_brain_scaleFree,faloutsos99_internet_powerLaw,mislove07_onlineSocialNetworks_scalefree,gjoka10_facebook_network,caldarelli07_scalefree} In addition to the degree distributions of these networks, there has been significant interest in their higher-order connectivity, defined in various ways. \citep{benson16_complexNetwork_motif,nolte20_brain_higherOrderNetwork,lambiotte19_higherOrderNetworks,battiston20_higherOrderNetworks,bianconi21_higherOrderNetwork} Some of these notions are graph-theoretic, like clustering coefficients \citep{watts98_clusteringCoefficient} and motif counts \citep{benson16_complexNetwork_motif}. One may also understand higher-order connectivity by viewing the network as a higher-dimensional analogue of a graph, like a \emph{hypergraph} or a \emph{simplicial complex}. In particular, the \emph{Betti numbers} of simplicial complexes, a concept from algebraic topology, generalize the counts of connected components and cycles to the counts of higher-dimensional holes, and they have proven to be useful statistics in recent topological-data-analytic applications. \citep{aktas19_TDA_survey_network,carlsson09_topodata,chazal21_TDA_survey_data} 
For instance, it was observed that the way holes emerge in (biological) neural networks helps distinguish different stimuli to the brains of different animals. \citep{reimann17_neuralCliques} However, understanding Betti numbers is in general difficult for conceptual, computational and analytical reasons. To the best of our knowledge, there has been no analytical results on the Betti numbers of scale-free simplicial complexes, with the notable exception of \citep{oh21_bettiNumbers_trianglePreferentialAttachment_bettiNumber}, which considers a 2-dimensional model and determines the asymptotics of the Betti number at dimension 1.

\begin{figure}[t]
\centering
\includegraphics[height = 5.5cm]{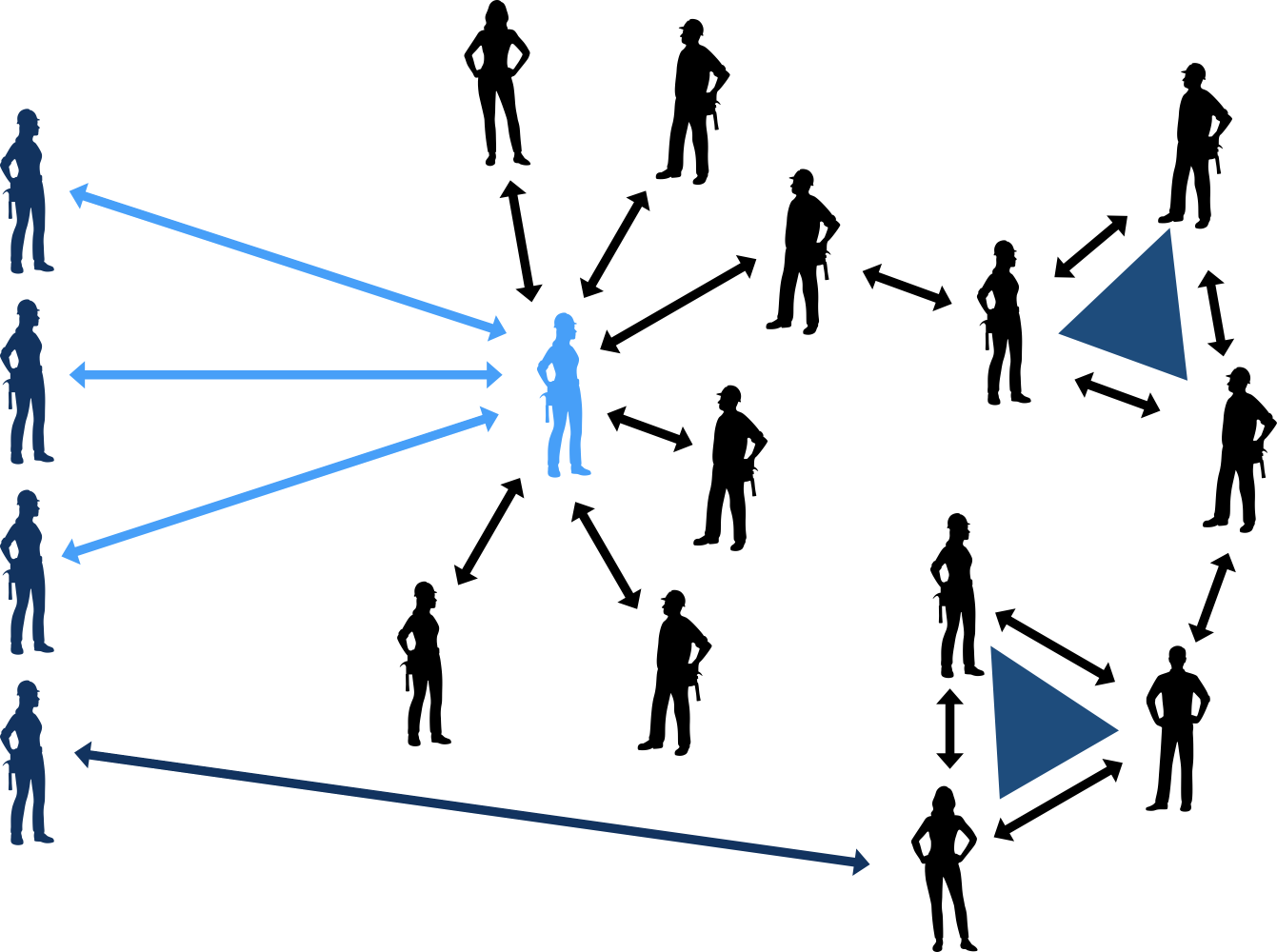}
\includegraphics[height = 5.5cm]{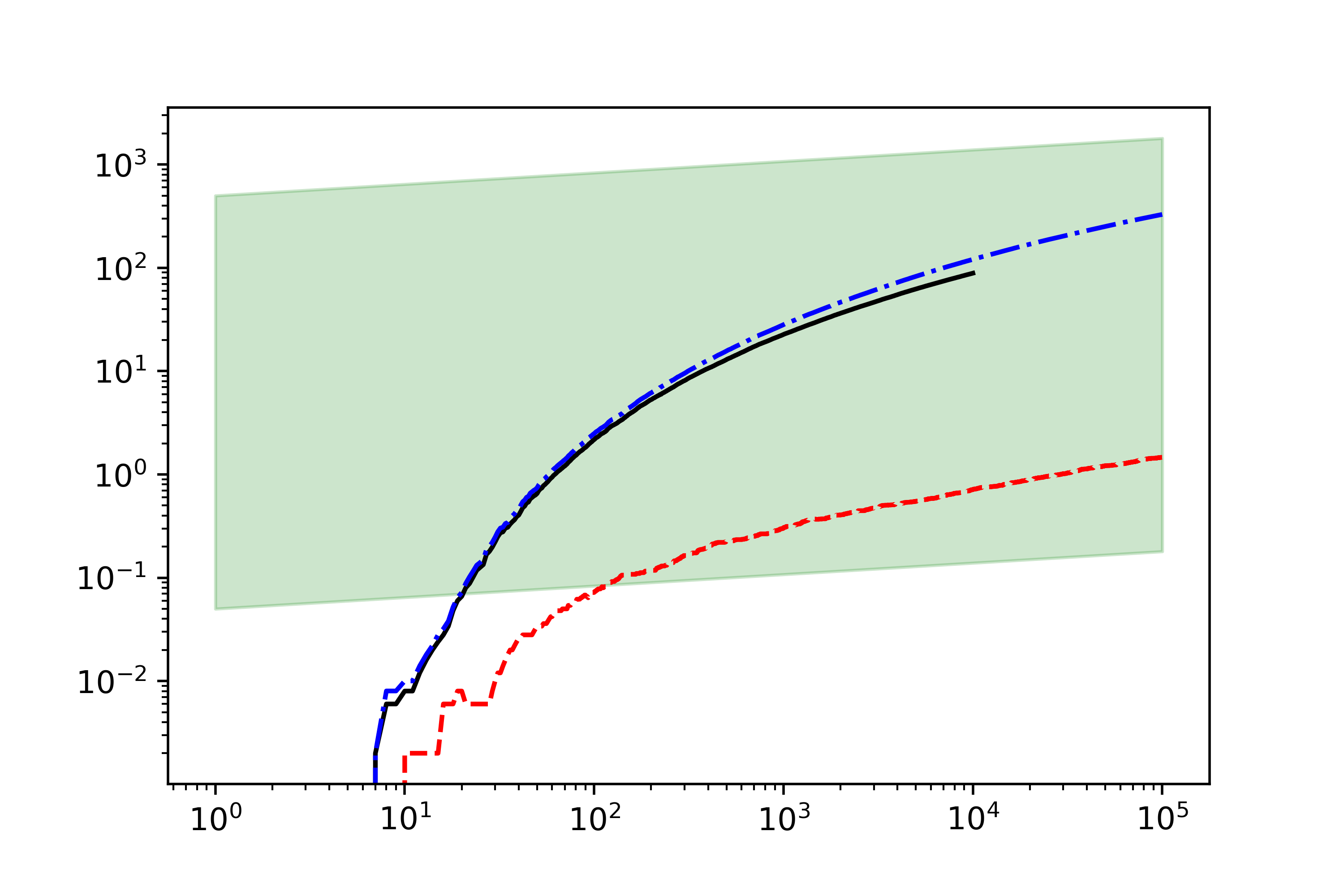}
\caption{(Left) An illustration of the preferential attachment and clique building mechanism. When new nodes (drawn as people) in the left column are added to the network, they are more likely to attach to already popular nodes (who have high degrees), like the light blue person in the figure. Fully connected subsets of nodes form triangles, tetrahedra and their higher-dimensional analogues in the clique complex. Note that in order to have triangles, each new node must connect to at least 2 nodes, but we only draw one connection for each new node to keep the illustration simple.
(Right) The log-log plot of the evolution of the mean Betti number at dimension 2 for 500 preferential attachment clique complexes. The horizontal axis is the number of nodes in log-scale, the black curve corresponds to the mean Betti number, in log scale as well. The dotted curves correspond to the mean upper and lower bounds in our argument (specifically in \cref{prop:decomposition}). The slope of the shaded region is the asymptotical growth rate of the expected Betti number. The position and the width of the shaded region is chosen posthoc manually because the theoretical constants are too conservative.}
\label{fig:teaser}
\end{figure}

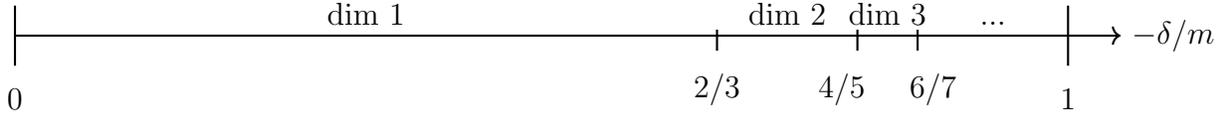
\begin{figure}[t]
\centering

\begin{tikzpicture}[x={(0cm, 8cm)}, y={(28cm, 0cm)}, thick, scale=0.5]


  \draw (-0.1,0) node[label=below:$0$] {} -- (0.1,0);
  \draw (-0.05,2/3) node[label=below:$2/3$] {} -- (0.02,2/3);
  \draw (-0.05,4/5) node[label=below:$4/5 \quad$] {} -- (0.02,4/5);
  \draw (-0.05,6/7) node[label=below:$\quad 6/7$] {} -- (0.02,6/7);
  \draw (-0.1,1) node[label=below:$1$] {} -- (0.1,1);
  
  \draw (0,0) -- (0,2/3) node[midway, above] {dim 1};
  \draw (0,2/3) -- (0,4/5) node[midway, above] {dim 2};
  \draw (0,4/5) -- (0,6/7) node[midway, above] {dim 3};
  \draw (0,6/7) -- (0,1) node[midway, above] {...};
  \draw[->] (0,1) -- (0,1.05) node[right]{$-\delta/m$};
\end{tikzpicture}
\caption{The top dimensions with unbounded expected Betti numbers for different values of $-\delta/m \in [0, 1)$ for $m$ not too small (recall that $-\delta/m$ increases with the strength of preferential attachment effect; see \cref{thm:expected_BettiNumbers} for the precise condition on $m$). $2/3$, $4/5$ and $6/7$ are the critical thresholds for dimensions $2$, $3$ and $4$ respectively.}
\label{fig:phase_diagram}
\end{figure}

In this paper, we use \emph{preferential attachment graphs} as models of scale-free networks, and we analytically determine the asymptotics of the expected Betti numbers of their \emph{clique complexes} (also known as \emph{flag complexes}) at all dimensions. An illustration of the clique complex of a preferential attachment graph is shown on the left panel of \cref{fig:teaser}. We show that, under mild assumptions, the expected Betti number at dimension 1 is asymptotically linear in the number of nodes, and those at higher dimensions are bounded between constant multiples of sublinear powers of the number of nodes. The precise statements of our main results, \cref{thm:expected_BettiNumbers} and \cref{prop:trivial_cases}, are given at the end of \cref{sec:main_results}.

Our results suggest that preferential attachment favors higher-order connectivity. For each dimension larger than $1$, when the preferential attachment effect reaches a critical threshold, the random simplicial complex undergoes a phase transition, in the sense that the expected Betti number at that dimension ceases to be bounded, and diverges to infinity as the number of nodes increases. Therefore, when the preferential attachment effect increases, more and more topological phenomena become possible; and the topological complexity at each dimension increases as well. The critical thresholds for the lowest dimensions are illustrated in \cref{fig:phase_diagram}.

On the right panel of \cref{fig:teaser}, we illustrate the sublinear growth of the average Betti number at dimension 2 through a simulation. Our theorem and our choice of parameters dictate that the curve for the evolution of the expected Betti number is eventually contained in a band with slope $2/9$, which is the slope of the shaded region. The evolution of our estimate of the expected Betti number is plotted as the solid curve, and it is plausible that it remains inside the shaded region when extrapolated indefinitely. We discuss the simulation in greater detail in \cref{sec:numerical_simulation}.

\paragraph{Scale-Free Networks} Scale-free networks have been widely studied in the network analysis community. In \citep{barabasi99_preferentialAttachment}, Barabasi and Albert proposed modeling scale-free networks with preferential attachment graphs. Since then the model has been refined and generalized in many different ways \citep{bollobas01_preferentialAttachemnt_LCD,buckley04_preferentialAttachment,holme02_preferentialAttachment}. Regarding higher-order connectivity of preferential attachment graphs, their \emph{clustering coefficients} has been well studied (see, for instance, \citep{bollobas02_scaleFree,holme02_preferentialAttachment,eggemann11_preferentialAttachment_clusteringCoefficient,ostroumova13_preferentialAttachment_clusteringCoefficient,prokhorenkova17_preferentialAttachment_clusteringCoeff}). In \citep{garavaglia19_subgraphs_preferentialAttachment}, the growth rate of the expected number of small \emph{motifs} in preferential attachment graphs was determined. In \citep{bianconi16_growingSimplicialComplex,courtney17_growingWeightedSimplicialComplex}, some forms of preferential attachment simplicial complexes, were considered, and power laws for some forms of higher-dimensional degrees were found. However, little is known about topological invariants of scale-free simplicial complexes beyond dimension 1 \citep{oh21_bettiNumbers_trianglePreferentialAttachment_bettiNumber}.

\paragraph{Random Simplicial Complexes} In general, the literature on \emph{random simplicial complexes} has been growing rapidly in recent years. \citep{kahle14_randomSimplicialComplexes_survey,bobrowski22_randomSimplicialComplexes_survey} Of particular interest to us are random clique complexes, and we review some results about these complexes below.

Being natural extensions of Erdos-Renyi graphs, Erdos-Renyi clique complexes have been studied intensively. The Betti numbers of Erdos-Renyi clique complexes were found to be supported at a critical dimension with high probability in \citep{kahle09_randomCliqueComplex,kahle14_randomCliqueComplex}. This result can be seen as a generalization of the classical result on the phase transitions of Erdos-Renyi graphs \citep{erdos59_erdosRenyi}. Limit theorems for the Betti number at the critical dimension were established \citep{kahle13_randomComplexes_betti_limitTheorems}. Multivariate central limit theorems for certain counts for these complexes were established in \citep{temcinas22_edrosRenyi_clique_multivariateCLT}. The fundamental groups of these complexes were studied in \citep{costa15_erdosRenyi_clique_fundamenalGroup}. These complexes were shown to be homotopy equivalent to a simplicial complex of the critical dimension with high probability in \citep{malen23_erdosRenyi_clique_collapsible}. Persistence homology for Erdos-Renyi clique complexes was considered in \citep{ababneh22_erdosRenyi_clique_persistence}.

A clique complex of a graph may be regarded as a (radius-1) \emph{Rips complex} with respect to the graph metric. Rips complexes are geometric clique complexes where nearby points are connected. They have gained substantial attention due to their applications in topological data analysis \citep{bobrowski18_randomGeometricComplexes_survey}. Limit theorems for the Betti numbers of random Rips complexes constructed from independent and identically distributed points were established in \citep{kahle11_geometricComplex,kahle13_randomComplexes_betti_limitTheorems}. They are generalizations of connectivity results for random geometric graphs in \citep{penrose2003_randomGeometricGraphs}. The asymptotics of the expected Betti numbers of the Rips complexes of stationary point processes were established in \citep{yogeshwaran15_geometricComplexes_stationaryPP_betti}. The persistent homology of Rips complexes was studied in \citep{bobrowski17_randomgometricComplexes_persistentCycles,hiraoka18_persistenceDiagrams_limitTheorems}, among many other works. Functional limit theorems for the Euler characteristics of Rips complexes were established in \citep{thomas21_randomGeometricComplexes,thomas21_extrememSamplePointCloud_eulerCharacteristics_limitTheorem,owada22_topologicalCrackle_functionalSLLN}.

\paragraph{Comparison with Other Random Simplicial Complexes} Our results suggest the following similarities between preferential attachment clique complexes, Erdos-Renyi clique complexes \citep{kahle13_randomComplexes_betti_limitTheorems} and random Rips complexes \citep{kahle11_geometricComplex,yogeshwaran15_geometricComplexes_stationaryPP_betti}:
\begin{itemize}
\item the major contribution to the Betti numbers are due to small cycles; and
\item the expected Betti numbers have a dominating dimension, which is 1 in this case.
\end{itemize}
On the other hand, our complexes have two distinctive features.
\begin{itemize}
\item Unlike Erdos-Renyi clique complexes, there is a range of dimensions with positive expected Betti numbers, which grow with the number of nodes.
\item Compelled by the preferential attachment mechanism, holes are predominantly represented by highly inter-connected cycles.
\end{itemize}

\paragraph{Dominating Cycles and Proof Synopsis}

We describe these small inter-connected cycles of dimension $2$ in the last bullet point above. The case for higher dimensions is similar. For each positive integer $k$, let $\Gamma_k$ be the graph consisting of 4 vertices forming a square (with no diagonals) and $k$ other vertices that are each connected to all corners of the square. $\Gamma_3$ is shown in \cref{fig:Gamma3}. At dimension $2$, the Betti number of the clique complex of this graph is $k-1$, which is one less than the number of distinct (but not disjoint) copies of $\Gamma_1$ in $\Gamma_k$ when the two graphs are suitably directed, as in \cref{fig:Gamma3}. Therefore, the contribution of such subgraphs to the Betti number can be approximated by the number of $\Gamma_1$'s in the graph.

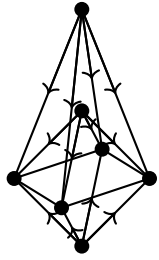
\begin{figure}[t]
\centering
\begin{tikzpicture}[x={(2cm, 0cm)}, y={(0.6cm, 0.866cm)},z={(0cm,2cm)}, thick, scale=0.45]
  \coordinate (A) at (1,0,0);
  \coordinate (B) at (0,1,0);
  \coordinate (C) at (-1,0,0);
  \coordinate (D) at (0,-1,0);
  \coordinate (E) at (0,0,-1);
  \coordinate (F) at (0,0,1);
  \coordinate (G) at (0,0,2.5);

  \draw (A) node[circle,fill,inner sep=2pt] {};
  \draw (B) node[circle,fill,inner sep=2pt] {};
  \draw (C) node[circle,fill,inner sep=2pt] {};
  \draw (D) node[circle,fill,inner sep=2pt] {};
  \draw (E) node[circle,fill,inner sep=2pt] {};
  \draw (F) node[circle,fill,inner sep=2pt] {};
  \draw (G) node[circle,fill,inner sep=2pt] {};
  
  \draw (A) -- (B) -- (C) -- (D) -- cycle;
  \foreach \x in {A,B,C,D}
    \foreach \y in {E,F,G}{
      \draw (\y) -- (\x);
      \draw[->] (\y) -- ($(\y)!0.5!(\x)$);
      }

\end{tikzpicture}
\caption{The graph $\Gamma_3$. Edges point towards the horizontal square. Unmarked edges can be directed arbitrarily.}
\label{fig:Gamma3}
\end{figure}

When we approximate the Betti number of the whole complex with the number of copies of $\Gamma_1$'s, the error term of the approximation consists of a few parts. First, other subcomplexes may also add to the Betti number, like (the clique complexes of) subgraphs with vertices attached to pentagons rather than squares. However, such subcomplexes are more complicated, and hence fewer of them arise from the preferential attachment mechanism.

On the other hand, copies of $\Gamma_k$ may not represent actual holes because they could be boundaries of something else. (e.g. the boundary of a solid ball is a hollow sphere but the ball itself is not hollow.) However, again, such ``something else" objects are also more complicated and fewer of them arise. For technical reasons, we will analyze two types of boundaries separately.

We justify this description in \cref{sec:main_idea} and the proof of \cref{lem:link_BettiNumber}. \cref{lem:prob_link_contains_sphere,lem:link_BettiNumber} together give the asymptotics of the main term. The error terms are handled by \cref{lem:miscarriage} and another application of \cref{lem:link_BettiNumber}.

\paragraph{Proof Techniques}
Like many other works on random simplicial complexes, we rely on subgraph counting results and a characterization of minimal cycles. Specifically, we rely on Garavaglia and Stegehuis' subgraph counting results in \citep{garavaglia19_subgraphs_preferentialAttachment}, and we generalize a result on minimal cycles due to Gal and Kahle \citep{gal05_clique_spheres,kahle09_randomCliqueComplex} to the setting of relative homology using an exact sequence argument. The use of relative homology is needed for the lower bound on the expected Betti numbers. More details can be found in \cref{sec:main_idea}.

As in \citep{kahle14_randomCliqueComplex}, \emph{links} play a crucial role in our argument to simplify the complicated connections of our dominating minimal cycles. Unlike in \citep{kahle14_randomCliqueComplex}, our approach (\cref{prop:decomposition}) is based on homological algebra. Garland's spectral approach \citep{garland73_vanishingCohomology} is not applicable because we know the Betti numbers do not vanish.

Establishing lower bounds on Betti numbers is often difficult. The strong Morse inequality (Theorem 1.8 of \citep{forman02_discreteMorse_guide}) is not applicable beyond the dominating dimension, which is 1, and we are not aware of applicable techniques for our case in the literature. In \citep{kahle14_randomCliqueComplex}, an unfilled hole is constructed to give the lower bound of $1$. In \citep{kahle11_geometricComplex,yogeshwaran15_geometricComplexes_stationaryPP_betti}, the Betti numbers are \emph{equal} to the counts of certain subcomplexes in the sparse regime, because distinct cycles are not connected. This is not applicable in our case because our cycles are interconnected.

We obtain our lower bound by finding some copies of the $\Gamma_k$'s, and subtracting, from the counts of these copies, the sum of probabilities that the $\Gamma_1$'s in these $\Gamma_k$'s fail to contribute to the Betti number. A relative-homology argument relates this sum to another count of certain subcomplexes, which can estimated with a probabilistic argument.

We remark that our method gives estimates of the expected \emph{persistent Betti numbers} as well for the filtration of node arrival (the complex at time $t$ consists of the first $t$ nodes), but to keep the exposition simple we do not discuss this point in the present paper.

\paragraph{Paper Outline}

The rest of the paper is organized as follows. We give basic definitions and state the main results in \cref{sec:main_results}. We first review the background materials in \cref{sec:background}. We begin proving the main theorem, \cref{thm:expected_BettiNumbers}, in \cref{sec:main_idea} by presenting a decomposition result, which justifies our discussion on copies of $\Gamma_1$'s above. We complete the proof of \cref{thm:expected_BettiNumbers} in \cref{sec:proof_main_result}. In \cref{sec:proof_trivial_cases}, we handle the boundary cases left out by the theorem by proving \cref{prop:trivial_cases}. We present and discuss the simulation results in \cref{sec:numerical_simulation}, and we discuss future directions in \cref{sec:future_directions}.

\paragraph{Acknowledgement} 

This research was partially supported by the AFOSR grant FA9550-22-1-0091 and by the Cornell University Center for Advanced Computing, which receives funding from Cornell University, the National Science Foundation, and members of its Partner Program.

The authors would like to thank Shmuel Weinberger, Jason Manning, Takashi Owada, Andrew Thomas and Benjamin Thompson for insightful discussion.

\section{Basic Definitions and Main Results}
\label{sec:main_results}

Our main results concern the expected Betti numbers of preferential attachment clique complexes. We will state them precisely after introducing all relevant terms and notations.

\paragraph{Preferential Attachment Graphs} We first define preferential attachment graphs. Incomplete lists of the multitude of variants of such graphs can be found in \citep{prokhorenkova17_preferentialAttachment_clusteringCoeff} and Chapter 4.3 of \citep{garavaglia19_preferentialAttachment_thesis}. We adopt the affine model in \citep{garavaglia19_subgraphs_preferentialAttachment}, since we will rely heavily on the subgraph count results therein.

\begin{definition}[(Affine) Preferential Attachment Graphs; Definition 1 of \citep{garavaglia19_subgraphs_preferentialAttachment}]
\label{def:preferential_attachment}
Let $T, m$ be positive integers with $T \geq 2$ and let $\delta \in (-m, \infty)$. The preferential attachment graph $G(T, \delta, m)$ is the random directed graph with no self-loops but possibly with repeated edges that is constructed inductively as follows.
\begin{itemize}
\item $G(2, \delta, m)$ is the deterministic graph with two nodes, indexed by 1 and 2, and $m$ edges from node 2 to node 1.
\item $G(T, \delta, m)$ is constructed by adding a node, with index $T$, to $G(T - 1, \delta, m)$ and $m$ edges from node $T$ to randomly chosen nodes in $G(T - 1, m, \delta)$. For $1 \leq v \leq T-1$, the probability that node $v$ is chosen is proportional to $\delta$ plus the degree of node $v$, which is updated every time an edge is added.
\end{itemize}
\end{definition}

We will consider the preferential attachment graphs $G(T, \delta, m)$ with $\delta < 0$. The more negative $\delta$ is, the stronger the preferential attachment effect.

\paragraph{Simplicial Complexes, Homology and Betti Numbers} Next, we introduce simplicial complexes and Betti numbers, which are the \emph{ranks} of \emph{homology groups}. The precise definitions of these topological concepts are long and technical. We will describe these concepts somewhat informally and point the reader to the literature for the precise definitions. We refer the reader to \citep{giblin_2010_homology} for an elementary introduction, Chapter 2 of \citep{hatcher02_algtopo} for a thorough exposition, and \citep{munkres84algtopo} for a rigorous exposition based on simplicial complexes.


A simplicial complex is a combinatorial object formed by gluing together vertices, edges, triangles, tetrahedra and higher-dimensional \emph{simplices} in a principled manner. The precise definition can be found in Definitions 3.14 and 3.18 of \citep{giblin_2010_homology}, and Chapters 1.2 and 1.3 of \citep{munkres84algtopo}. 
A simplicial complex $X$ is said to be a \emph{clique complex} if it contains a simplex whenever it contains all edges of the simplex. For example, the filled triangle is a clique complex, while the hollow triangle is not. The clique complex of a simple graph is the simplicial complex formed by adding a simplex to the graph whenever all edges of the simplex are in the graph.

The homology group $H_q(X)$ of a simplicial complex $X$ at dimension $q$ is an abelian group that consists of equivalence classes of $q$-dimensional \emph{cycles}. Two cycles that differ by the \emph{boundary} of a formal sum of $(q+1)$-dimensional simplices are considered equivalent. These equivalence classes, called \emph{homology classes}, are formal sums of $q$-dimensional ``holes" in the simplicial complex, with the exception that $H_0(X)$ consists of formal sums of path components. For example, for the $n$-dimensional sphere $S^n$ and the $n$-dimensional solid ball $D^n$ (suitably triangulated), 
\begin{equation}\label{eqn:sphere_betti}
H_q(S^n) = \begin{cases}
\mathbb{Z} & \text{ if } q \in \{0, n\}\\
0 & \text{ otherwise; }
\end{cases}
\qquad
H_q(D^n) = \begin{cases}
\mathbb{Z} & \text{ if } n = 0\\
0 & \text{ otherwise.}
\end{cases}
\end{equation}
The equivalence relation of cycles weeds out cycles that witness ``fake" holes. For example, the boundary sphere of a solid ball is a cycle, but it does not really represent a hole in the solid ball. 
The precise definition of homology groups can be found in Definition 4.8 of \citep{giblin_2010_homology}, Chapter 1.5 of \citep{munkres84algtopo} and the section titled ``Simplicial Homology" in Chapter 2.1 of \citep{hatcher02_algtopo}.


The Betti number $\beta_q(X)$ of a simplicial complex $X$ at dimension $q$ is the rank of $H_q(X)$. In symbols, $\beta_q(X) = \rk H_q(X)$. The rank of a group is the size of a maximal linearly independent subset. If the group is a vector space, then the rank is the dimension. Roughly speaking, the Betti numbers are the numbers of components and holes in $X$. For instance, 
\begin{equation*}\label{eqn:sphere_betti}
\beta_q(S^n) = \begin{cases}
1 & \text{ if } q \in \{0, n\}\\
0 & \text{ otherwise.}
\end{cases}
\end{equation*}
The precise definition of Betti numbers can be found on p.155 of \citep{giblin_2010_homology}, p.24 of \citep{munkres84algtopo} and p.130 of \citep{hatcher02_algtopo}.

\paragraph{Main Results} We now state our main results. We adopt the asymptotical notations in \cref{tab:asymptotics}, where $f$ and $g$ are assumed to be nonnegative functions defined on $\mathbb{N}$. Following the notation in \citep{garavaglia19_subgraphs_preferentialAttachment}, let
\begin{equation}\label{eqn:chi}
\chi(\delta, m) = 1 - \frac{1}{2 + \delta/m} \in (0, 1/2).
\end{equation}
When $\delta$ becomes more negative, $\chi$ decreases, and hence $\chi$ decreases with strength of the preferential attachment effect. It is important not to confuse $\chi$ with the Euler characteristic, which is also commonly denoted by $\chi$. 

We define the preferential attachment clique complex $X(T, \delta, m)$ as follows.

\begin{definition}[Preferential Attachment Clique Complex]
Let $G(T, \delta, m)$ be the preferential attachment graph, and $G_\text{simple}(T, \delta, m)$ be the graph obtained by replacing all repeated edges in $G(T, \delta, m)$ with simple edges. The preferential attachment clique complex $X(T, \delta, m)$ is the clique complex of $G_\text{simple}(T, \delta, m)$.
\end{definition}

Our main result is as follows.

\begin{theorem}\label{thm:expected_BettiNumbers}
Let $X(T, \delta, m)$ be the preferential attachment clique complex. Let $q \geq 2$ and suppose $m \geq 2q$. Then
\begin{equation*}
E(\beta_q(X(T, \delta, m)) = 
\begin{cases}
\Theta(T^{1 - 2q\chi(\delta, m)}) & \text{ if } 1 - 2q\chi(\delta, m) > 0 \\
\Theta(\log T) & \text{ if } 1 - 2q\chi(\delta, m) = 0 \\
O(1) & \text{ otherwise,}
\end{cases}
\end{equation*}
where the big-Oh and big-$\Theta$ constants (see \cref{tab:asymptotics}) depend only on $q, \delta, m$, and $E$ denotes expectation.
\end{theorem}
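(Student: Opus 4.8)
The plan is to sandwich $E(\beta_q(X(T,\delta,m)))$ between an upper and a lower bound that both equal, up to a provably lower-order error, the expected number of copies of one explicit directed subgraph, and then to evaluate that count with the preferential-attachment subgraph-counting results of Garavaglia and Stegehuis~\citep{garavaglia19_subgraphs_preferentialAttachment}. The higher-dimensional analogue of the graph $\Gamma_k$ from the introduction is the graph $\Gamma_k^{(q)}$ obtained by taking the boundary of the $q$-dimensional cross-polytope (a $(2q)$-vertex triangulation of $S^{q-1}$, which is a clique complex) and attaching $k$ apex vertices, each joined to all $2q$ of its vertices; the clique complex of $\Gamma_k^{(q)}$ has $q$-th Betti number $k-1$, and coning off two ``opposite'' apices recovers the minimal clique-complex $q$-sphere $\partial\diamondsuit^{q+1}$ on $2q+2$ vertices.

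\textbf{Step 1 (localization via links).} First I would invoke the homological decomposition \cref{prop:decomposition}: since a $q$-cycle meeting a vertex $v$ restricts to a $(q-1)$-cycle in $\mathrm{lk}(v)$, which is again a clique complex on the neighbours of $v$, one can bound $\beta_q(X)$ above by (essentially) $\sum_v \beta_{q-1}(\mathrm{lk}(v))$ and below by the same expression minus a relative-homology correction accounting for cycles that become boundaries once $v$ is re-attached. Because $X$ is a clique complex, I would then apply the Gal--Kahle classification of minimal nontrivial cycles~\citep{gal05_clique_spheres,kahle09_randomCliqueComplex}, in the relative form the paper obtains by an exact-sequence argument, so that every minimal generator is (relatively) a cross-polytopal sphere $\partial\diamondsuit^{j}$.

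\textbf{Step 2 (main term).} For the dominant value $j=q$ this identifies the principal contribution with copies of $\Gamma_1^{(q)}$ (the $(2q)$-vertex cross-polytope boundary together with one apex joined to all $2q$ vertices, suitably oriented). I would use \cref{lem:prob_link_contains_sphere} and \cref{lem:link_BettiNumber} to pass, in expectation, from the link Betti numbers to the expected number of such copies in $G(T,\delta,m)$; the hypothesis $m\ge 2q$ is exactly what allows one vertex to carry out-degree $2q$ (and the others out-degree up to $2q-2$), so these configurations occur. Garavaglia and Stegehuis's formula~\citep{garavaglia19_subgraphs_preferentialAttachment} then evaluates the resulting sum over the vertices' birth times to $\Theta(T^{1-2q\chi(\delta,m)})$ when $1-2q\chi>0$, to $\Theta(\log T)$ at the threshold $1-2q\chi=0$, and to $O(1)$ below it --- this trichotomy of the exponent is the source of the three cases in the statement.

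\textbf{Step 3 (error terms) and assembly.} Two families of errors remain, both of strictly lower order by the same machinery. First, other minimal generators --- larger cross-polytopal spheres $\partial\diamondsuit^{j}$ with $j>q$, or spheres coned off denser bases than $\partial\diamondsuit^{q}$ --- are more complex subgraphs, so \cref{lem:miscarriage} together with a further application of \cref{lem:link_BettiNumber} bounds their expected counts by $o$ of the main term (or places them in the same $\Theta$-class in the two boundary regimes). Second, a counted copy of $\Gamma_1^{(q)}$ may fail to witness a hole because it bounds; the relative version of Gal--Kahle expresses the number of such spurious copies through the expected number of yet more intricate ``filling'' subgraphs, again $o$ of the main term by~\citep{garavaglia19_subgraphs_preferentialAttachment}. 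Combining, the \cref{prop:decomposition} upper bound is (main term) $+\,o(\text{main})$ and the lower bound is (main term) $-\,o(\text{main})$, yielding the claimed $\Theta(T^{1-2q\chi})$, $\Theta(\log T)$, or $O(1)$ asymptotics with constants depending only on $q,\delta,m$. The main obstacle is the lower bound: upper bounds on Betti numbers are comparatively routine, but a matching lower bound forces one to exhibit genuine nontrivial homology and show it is not cancelled, which is precisely why \cref{prop:decomposition} is phrased homologically (Garland-type spectral vanishing is unavailable since the Betti numbers do not vanish) and why the relative strengthening of Gal--Kahle is needed. Concretely, the delicate points are (i) controlling the expected number of fillings so that a positive proportion of the $\Gamma_k^{(q)}$-configurations survive to contribute to $H_q$, and (ii) verifying that the subgraph-counting exponent of every competing configuration is \emph{strictly} below $1-2q\chi$, with the two boundary regimes handled separately --- careful but essential bookkeeping.
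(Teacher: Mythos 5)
Your proposal follows essentially the same route as the paper: the link-based decomposition of \cref{prop:decomposition}, the relative-homology extension of the Gal--Kahle minimal-cycle classification, the main term from counting cross-polytopal-sphere-plus-apex configurations via \cref{lem:prob_link_contains_sphere,lem:link_BettiNumber}, and the lower-order error terms (denser minimal cycles and fillings) via \cref{lem:link_BettiNumber,lem:miscarriage} with the Garavaglia--Stegehuis subgraph estimates. The only caveat is that the paper's matching bounds agree in order of magnitude rather than as a sharp ``main term $\pm\, o(\text{main})$'' expansion, but this does not affect the claimed $\Theta$ asymptotics.
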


One could keep track of the big-$\Theta$ constants in the proof. For instance, when $m = 7, \delta = -5, q = 2$, the big-$\Theta$ constants can be chosen to be $C = 2.16 \times 10^{14}$ and $c = 1.18 \times 10^{-34}$. Finding the optimal constants, however, is beyond the scope of this work.

The theorem leaves out the trivial cases for $q < 2$ or $m < 2q$, which we address below.

\begin{proposition}\label{prop:trivial_cases}
Let $X(T, \delta, m)$ be the preferential attachment clique complex. Then
\begin{itemize}
\item For $q = 0$, $\beta_0(X(T, \delta, m)) \equiv 1$.
\item For $q = 1$, 
$
E[\beta_1(X(T, \delta, m))] = (m-1) T + o(T)
$
\item For $m < 2q$, $\beta_q(X(T, \delta, m)) \equiv 0$.
\end{itemize}
\end{proposition}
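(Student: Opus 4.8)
I will handle the three assertions separately; none of them uses \cref{thm:expected_BettiNumbers}. The case $q=0$ is immediate: the simple graph $G_{\mathrm{simple}}(T,\delta,m)$ is connected, because it is connected at $T=2$ and, since $m\ge 1$, every newly added node is joined to at least one older node; a simplicial complex has the same path components as its $1$-skeleton, so $\beta_0\equiv 1$. For the case $m<2q$ (which forces $q\ge 1$) I would induct on $T$ using Mayer--Vietoris. Write $X(T)=X(T-1)\cup\bigl(\{T\}\ast L_T\bigr)$, where $L_T$ is the link of node $T$ --- the clique complex of the subgraph of $G_{\mathrm{simple}}$ spanned by the at most $m$ out-neighbours of $T$ --- and $\{T\}\ast L_T$ is the closed star of $T$; these are subcomplexes of $X(T)$ with intersection $L_T\subseteq X(T-1)$, and $\{T\}\ast L_T$ is a cone, hence acyclic. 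The Mayer--Vietoris sequence then has the exact piece $\widetilde H_q(X(T-1))\to\widetilde H_q(X(T))\to\widetilde H_{q-1}(L_T)$. Since $L_T$ is a clique complex on at most $m<2q=2((q-1)+1)$ vertices, the minimal-cycle bound of Gal and Kahle \citep{gal05_clique_spheres,kahle09_randomCliqueComplex} (with the $2q$-vertex cross-polytope boundary as the extremal example) gives $\widetilde H_{q-1}(L_T)=0$; together with the inductive hypothesis $\widetilde H_q(X(T-1))=0$ (base case: $X(2)$ is a single edge) this forces $\widetilde H_q(X(T))=0$, so $\beta_q\equiv 0$.

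For the case $q=1$ I would use the Euler--Poincar\'e relation. Let $f_q=f_q(X(T,\delta,m))$ be the number of $q$-simplices. Since $G_{\mathrm{simple}}$ is $m$-degenerate (the largest-index vertex of any subgraph has degree at most $m$ there), $X$ has dimension at most $m$, so $\beta_q=f_q=0$ for $q>m$ and every sum below is finite. From $\beta_0=1$ and $\chi(X)=\sum_q(-1)^q\beta_q=\sum_q(-1)^q f_q$ we obtain
\[
E[\beta_1]=1-E[f_0]+E[f_1]-\sum_{q\ge 2}(-1)^q\bigl(E[f_q]-E[\beta_q]\bigr).
\]
Here $E[f_0]=T$, and $0\le\beta_q\le f_q$ deterministically (the $q$-th Betti number never exceeds the number of $q$-simplices), so it suffices to prove $E[f_1]=mT+o(T)$ and $E[f_q]=o(T)$ for each fixed $q\ge 2$; these yield $E[\beta_1]=(m-1)T+o(T)$.

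These last two facts I would reduce to subgraph/multi-edge counts. For $f_1$: edges born at distinct node arrivals are automatically distinct (the larger endpoint is the arriving node), and node $t$ contributes $m$ edges minus the number $R_t\ge 0$ of repeated picks among its $m$ choices, so $f_1=m(T-1)-\sum_t R_t$, and it suffices that $\sum_t E[R_t]=o(T)$. For $f_q$ with $q\ge 2$: grouping $q$-simplices by their largest vertex gives $f_q=\sum_t f_{q-1}(L_t)$, and $f_{q-1}(L_t)\le\binom mq$ vanishes unless $L_t$ contains an edge, so $E[f_q]\le\binom mq\sum_t P[L_t\text{ has an edge}]\le\binom mq\, E\bigl[\#\{\text{triangles in }G_{\mathrm{simple}}(T)\}\bigr]$; hence it suffices that the expected number of triangles is $o(T)$. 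The main obstacle is exactly these two probabilistic estimates, and both follow from the second-moment bound $E\bigl[\sum_v \deg_v(t)^2\bigr]=O(t^{2-2\chi})$ for the degrees in $G(t,\delta,m)$ --- this is where $\delta<0$, equivalently $\chi<1/2$, equivalently power-law exponent in $(2,3)$, is used: it makes $\sum_v (t/v)^{2(1-\chi)}$ convergent. Given this bound, any fixed pair of picks at time $t$ coincides with probability $O(t^{-2\chi})$, so $\sum_t E[R_t]=O(T^{1-2\chi})=o(T)$; applying the same estimate to the three edges of a would-be triangle (whose largest vertex plays the role of the attaching node) gives $E[\#\{\text{triangles}\}]=O(T^{1-2\chi}\log T)=o(T)$. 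These degree-moment asymptotics are standard for affine preferential attachment, and the triangle bound can alternatively be extracted from the subgraph-count theorem of \citet{garavaglia19_subgraphs_preferentialAttachment} that is already invoked elsewhere in the paper. Assembling the pieces in the displayed identity then gives $E[\beta_1]=1-T+(mT+o(T))+o(T)=(m-1)T+o(T)$.
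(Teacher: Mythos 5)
Your argument is correct, and for the two nontrivial bullets it takes a genuinely different (though closely related) route from the paper's. For $m<2q$ the paper gives a one-line deterministic argument: if $\beta_q>0$ there is a $q$-clique-minimal subcomplex, and \cref{cor:minimallyNontrivialSubcomplex_1skeleton} applied to the \emph{last} node of that subcomplex forces its degree there to be at least $2q$, which is impossible since inside the subcomplex that node has only its at most $m<2q$ distinct out-neighbours and no in-edges. You instead induct on $T$ with the Mayer--Vietoris decomposition $X^{(T)}=X^{(T-1)}\cup \mathrm{St}(T)$ and kill $\beta_{q-1}(L^{(T)})$ by the $2q$-vertex bound for $(q-1)$-clique-minimal complexes; this works (it reuses the same decomposition as \cref{prop:decomposition}), with the small caveat that over $\mathbb{Z}$ the inductive step should be phrased in terms of ranks (rank subadditivity in the exact sequence), since the minimal-cycle lemma controls Betti numbers rather than torsion --- with field coefficients the distinction disappears. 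For $q=1$ the paper uses the strong Morse inequalities, which require only $E[f_0]$, $E[f_1]$, $E[f_2]$ and the bi-angle count, all read off from \cref{thm:subgraphsCounts_preferentialAttachment}; you use the exact Euler--Poincar\'e identity together with $0\le\beta_q\le f_q$, which additionally needs your (correct) observation that $\dim X\le m$, and you then reduce every $E[f_q]$, $q\ge 2$, to the triangle count. The probabilistic inputs are the same in both versions --- expected multi-edge and triangle counts are $o(T)$ --- and your second-moment computation gives the right orders $O(T^{1-2\chi})$ and $O(T^{1-2\chi}\log T)$, but as written it glosses over the conditioning needed to make the degree-moment argument rigorous (the degrees at time $t$ and the presence of the pre-existing edge of a would-be triangle are correlated); your own suggested fallback of citing \cref{thm:subgraphsCounts_preferentialAttachment}, which yields the bi-angle and triangle counts directly, is the cleaner way to close that step and is exactly what the paper does.
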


\begin{remark}
A notion of clique complex for multigraphs is defined in \citep{ayzenberg20_cliqueMultigraph}. This complex is not a simplicial complex but it is a regular CW complex. Simplices in this complex are fully connected subsets with labelled edges. We do not use this notion to keep the exposition simple; this notion also presents challenges to numerical simulations. We remark that our argument gives the same bound for this notion of clique complex upon slight modifications, because most edges in preferential attachment graphs are simple, and repeated edges only change expected subgraph counts by at most a constant factor.
\end{remark}

\begin{table}
\centering
\caption{Asymptotical notations}
\label{tab:asymptotics}
\begin{tabularx}{.8\textwidth}{|c|X|}
\hline
Notation & Definition \\
\hline\hline
$f(n) = O(g(n))$ & $f(n) \leq Cg(n)$ for large enough $n$ for some positive constant factor $C$. \\ \hline
$f(n) = \Omega(g(n))$ & $f(n) \geq cg(n)$ for large enough $n$ for some positive constant factor $c$. \\ \hline
$f(n) = \Theta(g(n))$ & $cg(n) \leq f(n) \leq Cg(n)$ for large enough $n$ for some positive constant factors $C$ and $c$. \\ \hline
$f(n) = o(g(n))$ & $\lim_{n \to \infty} f(n)/g(n) = 0$
\\\hline
\end{tabularx}
\end{table}

\section{Preliminaries}
\label{sec:background}

We recall relevant results in the literature in this section. First, we recall subgraph counting results in \cref{sec:bg_preferentialAttachment}. Then, in \cref{sec:bg_homologicalAlgebra}, we recall facts from homological algebra that helps us manipulate homology groups. We define a few simplicial complexes and subcomplexes that will be featured in our proof in \cref{sec:topo_notations}. Finally, we conclude this section by stating our generalization of Gal and Kahle's result on minimal clique cycles in \cref{sec:topological_facts}. We defer its homological-algebraic proof to \cref{sec:proof_minimallyNontrivialSubcomplex_1skeleton}. 

\subsection{Preferential Attachment Graphs}
\label{sec:bg_preferentialAttachment}

Recall that preferential attachment graphs are defined in \cref{def:preferential_attachment}, and we need certain subgraph count results, namely \cref{thm:subgraphsCounts_preferentialAttachment,prop:subgraphsProbs_preferentialAttachment}. We develop a few definitions to simplify their statements.

The preferential attachment graph $G(T, \delta, m)$ is a random subgraph of the underlying \emph{attachment graph} $U(T, m)$, which we define as follows.

\begin{definition}[Attachment Graph]
The $(T, m)$-attachment graph $U(T, m)$ is the directed multigraph such that
\begin{itemize}
\item the vertex set is $\{1, ..., T\}$,
\item there are $m$ edges between any pair of distinct nodes, and 
\item all edges point from later nodes to earlier nodes.
\end{itemize}
\end{definition}


The following definition will simplify the expression of the estimate in the theorem.

\begin{definition}[Degree and Power]
Let $\Gamma$ be a subgraph of $U(T, m)$ and $v$ be a vertex in $\Gamma$. Denote by $\indeg_\Gamma(v)$ and $\outdeg_\Gamma(v)$ the in- and out-degrees of $v$ with respect to $\Gamma$. The power $p_\Gamma(v)$ of $v$ is
\begin{align*}
p_\Gamma(v) &= - \left[\outdeg_\Gamma(v) + (\indeg_\Gamma(v) - \outdeg_\Gamma(v))(1 - \chi(\delta, m)) \right]
\\&= - \left[ (1 - \chi(\delta, m)) \indeg_H(v) + \chi(\delta, m)\outdeg_H(v)\right],
\end{align*}
where $\chi(\delta, m)$ is defined in \cref{eqn:chi}.
\end{definition}


\begin{theorem}[Theorem 1 of \citep{garavaglia19_subgraphs_preferentialAttachment}]
\label{thm:subgraphsCounts_preferentialAttachment}
Consider the preferential attachment graph $G(T, \delta, m)$.
Let $\Gamma$ be a subgraph of $U(T, m)$ with vertex set $V_\Gamma = \{v_1 < ... < v_{|V_\Gamma|}\}$ and with out-degrees bounded above by $m$. Then 
the expected count of subgraphs in $G(T, \delta, m)$ that are isomorphic to $\Gamma$ is
$$\Theta(T^A (\log T)^{r-1}),$$
where $A$ is the maximum value of the sequence $a_0, ..., a_{|V_\Gamma|}$ defined by
\begin{equation}\label{eqn:PAM_count_sequence}
a_k = |V_\Gamma| - k + \sum_{l > k} p_\Gamma(v_l),
\end{equation}
and $r$ is the number of maximizers. The big-$\Theta$ constants are independent of $T$ but do depend on $m$, $\delta$ and the in- and out-degree sequences of $\Gamma$.
\end{theorem}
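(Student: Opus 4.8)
The plan is to combine the conditional-independence (``P\'olya urn'') representation of the affine preferential attachment model with a Riemann-sum evaluation of the weighted ordered-tuple sum it produces; throughout write $k=|V_\Gamma|$ and $\chi=\chi(\delta,m)$. First I would invoke the standard strength-variable description of $G(T,\delta,m)$: there are random strengths $\psi_1,\dots,\psi_T$, built from independent Beta variables and concentrating well, such that, conditionally on $(\psi_i)_{i\le T}$, the $m$ out-edges of each vertex attach essentially independently and in proportion to the strengths of the earlier vertices; the net effect is that for $i<j$ the probability that $i$ and $j$ are adjacent in $G(T,\delta,m)$ is $\asymp i^{\chi-1}j^{-\chi}$, uniformly in $T$ with constants depending only on $\delta,m$. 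The same representation gives that for any \emph{fixed} finite collection of vertex pairs the joint adjacency probability equals the product of the individual ones up to a constant factor, the only error terms coming from the concentrated normalizing sums and from the $O(1)$ edges of $\Gamma$ meeting at a common vertex. The hypothesis $\outdeg_\Gamma\le m$ is exactly what makes the per-vertex $m$-edge multinomial approximation legitimate, and it also rules out trivially empty counts.

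Second, I would expand the expected count. Up to an $O(1)$ factor from relabelings, an occurrence of $\Gamma$ is an order-preserving injection $v_l\mapsto t_l$ with $1\le t_1<\dots<t_k\le T$ that realizes every edge of $\Gamma$, and by the previous paragraph a fixed such occurrence has probability $\asymp\prod_{(v_a\to v_b)\in E(\Gamma)}t_b^{\chi-1}t_a^{-\chi}$. Collecting the exponent attached to each $t_l$: every in-edge at $v_l$ contributes $\chi-1$ and every out-edge contributes $-\chi$, so the total exponent of $t_l$ is $-[(1-\chi)\indeg_\Gamma(v_l)+\chi\outdeg_\Gamma(v_l)]=p_\Gamma(v_l)$. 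Hence, up to $\delta,m$-dependent constants,
\[
E\bigl[\#\{\text{subgraphs of }G(T,\delta,m)\text{ isomorphic to }\Gamma\}\bigr]\ \asymp\ \sum_{1\le t_1<\dots<t_k\le T}\ \prod_{l=1}^{k}t_l^{\,p_\Gamma(v_l)}.
\]

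Third I would evaluate this sum. Writing $t_l=T^{x_l}$ and using that there are $\asymp T^{x_l}(\log T)\,dx_l$ integers near $T^{x_l}$, the sum is comparable to $(\log T)^k\int_{0\le x_1\le\dots\le x_k\le 1}T^{\sum_l x_l(p_\Gamma(v_l)+1)}\,dx$, so its order is governed by the linear program of maximizing $\sum_l x_l(p_\Gamma(v_l)+1)$ over the order simplex. The maximum is attained at a ``threshold'' vertex $x_1=\dots=x_j=0$, $x_{j+1}=\dots=x_k=1$, where the value is $(k-j)+\sum_{l>j}p_\Gamma(v_l)=a_j$, so the optimal value is $A=\max_{0\le j\le k}a_j$. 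A Laplace-type estimate of the integral then gives $T^A$ times $(\log T)^{d}$, where $d$ is the dimension of the optimal face of the program; a short combinatorial check (the optimal face forces $x_l$ to be constant between consecutive optimal thresholds) shows $d$ is one less than the number of optimal thresholds, i.e.\ $d=r-1$. This yields $\Theta(T^A(\log T)^{r-1})$ with $T$-free constants.

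The step I expect to be the main obstacle is the second half of the first paragraph, namely being honest about dependence. Preferential-attachment edge indicators are genuinely correlated, the underlying object is a multigraph, and repeated edges have to be accounted for; one must therefore (i) prove the edge-probability estimate $\asymp i^{\chi-1}j^{-\chi}$ with explicit, $T$-uniform control of the $\delta,m$-dependent constants, and (ii) show that for a fixed $\Gamma$ the joint occurrence probability factorizes up to a constant, which requires quantitative concentration of the strength variables and some care at the vertices where several edges of $\Gamma$ meet. Once this approximate-independence statement is available, the Riemann-sum comparison and the linear-programming / Laplace bookkeeping of the third step are routine.
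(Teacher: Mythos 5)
This statement is not proven in the paper at all: it is quoted verbatim as Theorem 1 of \citep{garavaglia19_subgraphs_preferentialAttachment}, and the paper's only comment on its proof is the remark that it can be derived from the occurrence-probability estimate (Lemma 1 of that reference), which the paper also imports as \cref{prop:subgraphsProbs_preferentialAttachment}. Your outline follows exactly that route, and in fact essentially reconstructs Garavaglia and Stegehuis' own argument: your first paragraph is the content of \cref{prop:subgraphsProbs_preferentialAttachment} (the P\'olya-urn/conditional representation, the edge estimate $\asymp i^{\chi-1}j^{-\chi}$, and the factorization of the joint occurrence probability up to $(\delta,m)$-dependent constants), and your second and third paragraphs are the standard reduction to the weighted ordered sum $\sum_{t_1<\dots<t_k\le T}\prod_l t_l^{p_\Gamma(v_l)}$ and its evaluation. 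The bookkeeping there is correct: the exponent of $t_l$ is indeed $p_\Gamma(v_l)$, the threshold vertices of the order simplex give the values $a_j$, and the Laplace-type count of flat directions yields $(\log T)^{r-1}$, matching \cref{eqn:PAM_count_sequence}. The one genuine gap, which you correctly flag yourself, is that the uniform approximate-independence statement is asserted rather than proven; that is precisely where the analytical weight of the cited paper lies (martingale/concentration control of the normalizing sums, uniformity of constants, multiple edges at a common vertex), so your argument is complete only modulo citing \cref{prop:subgraphsProbs_preferentialAttachment}, exactly as the paper itself does. Two smaller points to be careful about if you were to write this out: (i) ``isomorphic copies'' here must respect the directed multigraph structure of $\Gamma\subseteq U(T,m)$ (all edges point to earlier vertices), so the count is effectively over order-respecting embeddings; a non-order-preserving matching of the undirected shape corresponds to a different ordered graph with a possibly different exponent sequence, so the ``$O(1)$ relabeling'' remark needs this qualification; and (ii) the Riemann-sum substitution $t_l=T^{x_l}$ should be replaced by a monotone sum--integral comparison (or dyadic blocks) to make the $(\log T)$ accounting rigorous, including the convergent directions where $p_\Gamma(v_l)<-1$.
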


\begin{remark}
The original theorem is stated in terms of $\tau = 3 + \delta/m$ (see the end of Section 2.1 of \citep{garavaglia19_subgraphs_preferentialAttachment}) rather than $\delta, m$ or $\chi(\delta, m)$ . We do not use $\tau$ because nowhere is $\tau$ used in the proof and using $\tau$ does not simplify our expressions.
\end{remark}


\cref{thm:subgraphsCounts_preferentialAttachment} can be proven from the following lemma, which we will need for the lower bound of the expected Betti numbers.


\begin{proposition}[Lemma 1 of \citep{garavaglia19_subgraphs_preferentialAttachment}]
\label{prop:subgraphsProbs_preferentialAttachment}
Let $\Gamma$ be a subgraph of $U(T, m)$ with vertex set $V_\Gamma \subseteq \{1, ..., T\}$ and with out-degrees bounded above by $m$. Then the probability that $G(T, \delta, m)$ contains $\Gamma$ is
$$\Theta \left(\prod_{v \in V_\Gamma} v^{p_\Gamma(v)} \right),$$
where the big-$\Theta$ constants depend only on $\delta$, $m$, and the in-degree sequence of $\Gamma$.
\end{proposition}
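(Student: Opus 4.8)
The plan is to reveal the attachment process node by node, factorize the target probability into a product of conditional edge probabilities, and then reduce the resulting expression to a joint moment of the degree process that I evaluate asymptotically. The demand ``$G(T,\delta,m)\supseteq\Gamma$'' says that for each vertex $t\in V_\Gamma$ the $m$ out-edges emitted at the arrival of node $t$ realize the prescribed multiset of targets $S_t\subseteq\{1,\dots,t-1\}$ with $|S_t|=\outdeg_\Gamma(t)\le m$ (feasible by hypothesis). Let $A_t$ be this event and $(\mathcal F_s)$ the filtration of the first $s$ arrivals. Since $A_t$ is measurable with respect to node $t$'s choices, which given $\mathcal F_{t-1}$ are independent of the future, peeling off the largest index repeatedly yields the standard identity $P(G(T,\delta,m)\supseteq\Gamma)=E\big[\prod_{t\in V_\Gamma}P(A_t\mid\mathcal F_{t-1})\big]$.

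First I would compute the per-node factor. A simplification special to fixed $m$ is that the normalization is \emph{deterministic}: when node $t$'s $j$-th edge is placed, the total attachment weight is $(t-1)\delta+2m(t-2)+(j-1)=\Theta(t)$, because the total degree is twice the deterministic edge count; and $\delta+D_v(s)\ge\delta+m>0$ for every vertex, so all factors are genuinely positive. Summing over the $O(1)$ assignments of the prescribed targets to out-slots, and noting that degrees and the normalization move only by $O(1)$ during the $m=O(1)$ placements, gives $P(A_t\mid\mathcal F_{t-1})=\Theta\big(\prod_{v\in S_t}(\delta+D_v(t-1))/t\big)$, with $\Theta$-constant depending only on $m$ and $\outdeg_\Gamma(t)$. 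Taking the product over $t$ and pulling out the deterministic denominators reduces everything to estimating the joint degree moment $E\big[\prod_{\text{edges }(t\to v)}(\delta+D_v(t-1))\big]$, the denominators already contributing $\prod_t t^{-\outdeg_\Gamma(t)}$.

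The backbone of the moment estimate is the degree martingale. Let $a(t)\asymp t^{1-\chi(\delta,m)}$ be the deterministic normalization for which $Y_v(t):=(\delta+D_v(t))/a(t)$ is a nonnegative martingale; the exponent $1-\chi(\delta,m)=1/(2+\delta/m)$ is forced by the one-step recursion $E[\delta+D_v(t)\mid\mathcal F_{t-1}]=(\delta+D_v(t-1))(1+((2+\delta/m)t)^{-1}+O(t^{-2}))$, and at its birth time $t=v$ the martingale starts near $(\delta+m)v^{-(1-\chi(\delta,m))}$. Writing each factor as $\delta+D_v(t-1)=a(t-1)\,Y_v(t-1)$ splits the moment into the deterministic product $\prod_{\text{edges}(t\to v)}a(t-1)=\Theta(\prod_t t^{(1-\chi)\outdeg_\Gamma(t)})$ and the martingale moment $E[\prod_v Y_v(\cdot)^{\indeg_\Gamma(v)}]$, where at each vertex $v$ the $\indeg_\Gamma(v)$ factors are taken at various large times. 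Since each $Y_v(\cdot)$ converges almost surely and in every $L^p$ to a limit $\xi_v$ of order $v^{-(1-\chi(\delta,m))}$, the single-vertex moments are $\Theta(v^{-\indeg_\Gamma(v)(1-\chi(\delta,m))})$. Collecting the out-slot denominators $\prod_t t^{-\outdeg_\Gamma(t)}$, the $a$-product, and these moments, the exponent accumulated at each vertex $w$ is $-\outdeg_\Gamma(w)+(1-\chi)\outdeg_\Gamma(w)-(1-\chi)\indeg_\Gamma(w)=-\chi\,\outdeg_\Gamma(w)-(1-\chi)\indeg_\Gamma(w)=p_\Gamma(w)$, which is exactly the claimed $\prod_{v\in V_\Gamma}v^{p_\Gamma(v)}$.

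The main obstacle is the cross-vertex part of the moment: showing $E\big[\prod_v\xi_v^{\indeg_\Gamma(v)}\big]=\Theta\big(\prod_v v^{-\indeg_\Gamma(v)(1-\chi(\delta,m))}\big)$ even though the limit strengths $\xi_v$ of distinct vertices are genuinely (negatively) correlated through the shared normalization, so the joint moment does not factor on the nose. To control this I would invoke the P\'olya-urn (Beta--Gamma) representation of the affine model, under which, conditionally on a family of \emph{independent} node strengths, the prescribed edges become conditionally independent with probabilities that are explicit products of those strengths; the expectation then collapses to a product of Beta-function moments, each evaluated by Stirling to the correct power of $v$, with the cross-correlations surviving only as $\Theta(1)$ deterministic factors. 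This is precisely the step where the $\Theta$-constant acquires its dependence on the in-degree sequence $(\indeg_\Gamma(v))_{v}$ (through the orders of the moments at each vertex) in addition to $\delta$ and $m$, while the out-degrees enter only through the denominators and the bound $\outdeg_\Gamma\le m$. As a self-contained alternative I could instead set up a recursion for the mixed rising-factorial moments $E[\prod_v\prod_{l}(\delta+D_v(t)+l)]$, which linearizes the attachment dynamics; its solution gives the same power product, with the off-diagonal terms (two prescribed edges emitted by the same arriving node) contributing only lower-order corrections.
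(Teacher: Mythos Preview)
The paper does not prove this proposition; it is quoted as Lemma~1 of \citep{garavaglia19_subgraphs_preferentialAttachment} and used as a black box, so there is no in-paper argument to compare against.

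On its own merits your outline is the standard one and is correct: peel off vertices by conditioning on $\mathcal F_{t-1}$, exploit the deterministic normalization $(t-1)\delta+2m(t-2)+(j-1)=\Theta(t)$ to extract the denominator $\prod_t t^{-\outdeg_\Gamma(t)}$, and reduce to a joint degree moment whose exponent bookkeeping recovers $p_\Gamma(w)=-\chi\,\outdeg_\Gamma(w)-(1-\chi)\indeg_\Gamma(w)$. One small caution: the appeal to the almost-sure limits $\xi_v$ is slightly misleading as phrased, since the times at which the factors $Y_v(t-1)$ are evaluated are the \emph{fixed} vertex labels of $\Gamma$, not tending to infinity; what is actually needed is the uniform estimate $E\big[\prod_v Y_v(s_v)^{\indeg_\Gamma(v)}\big]=\Theta\big(\prod_v v^{-\indeg_\Gamma(v)(1-\chi)}\big)$ for all choices $s_v\ge v$. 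Both of your proposed devices --- the P\'olya-urn/Beta representation (which makes the attachments conditionally independent given independent strengths) and the rising-factorial moment recursion --- deliver this uniformly, and your observation that the $\Theta$-constant picks up the in-degree sequence through the orders of these moments, while the out-degrees enter only through the bounded denominators, is exactly right.
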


\subsection{Homological Algebra}
\label{sec:bg_homologicalAlgebra}

The technical definition of homology offers a host of homological-algebraic tools for deducing relationships between homology groups of different simplicial complexes, which are encoded by homomorphisms induced by maps between the complexes. The most important tool is called \emph{exact sequences}, which are generalizations of the rank-nullity theorem. Specifically, we will need the \emph{long exact sequence for triplets}, and the \emph{Mayer-Vietoris sequence}. We state them at the end of this section.

Every map $f: X \to Y$ between two simplicial complexes induces a homomorphism $f_q: H_q(X) \to H_q(Y)$ at every dimension $q$. This homomorphism maps a cycle in $X$ to the image of the cycle under $f$. Interesting relationships between the homology of a complex $Y$ and a subcomplex $X \subseteq Y$ are often revealed by the homomorphism induced by the inclusion map $f: X \to Y$ (defined by $f(x) = x$).

An exact sequence is a sequence of groups $(A_n)$ with homomorphisms $\varphi_n: A_n \to A_{n+1}$ between adjacent groups, such that $\ker \varphi_n = \im \varphi_{n-1}$. The sequence is said to be \emph{short} if it starts and ends with the trivial group $0$. We will use the following fact repeatedly.

\begin{proposition}[Alternating Sum of Ranks of a Short Exact Sequence; Exercise 3.16 of \citep{rotman2008_homologicalAlgebra}]
Let $0 \to A_1 \to ... \to A_n \to 0$ be a short exact sequence of finitely generated abelian groups. Then
$$\sum_i (-1)^k \rk A_k = 0, \text{ and }$$
$$\rk A_k \leq \rk A_{k-1} + \rk A_{k+1} \text{ for every }1 < k < n.$$
\end{proposition}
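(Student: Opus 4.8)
The plan is to reduce the statement to linear algebra by applying the exact functor $-\otimes_{\mathbb{Z}}\mathbb{Q}$. Since $\mathbb{Q}$ is flat over $\mathbb{Z}$, tensoring the given sequence with $\mathbb{Q}$ produces an exact sequence of finite-dimensional $\mathbb{Q}$-vector spaces $0 \to A_1\otimes\mathbb{Q} \to \cdots \to A_n\otimes\mathbb{Q} \to 0$, and $\rk A = \dim_{\mathbb{Q}}(A\otimes\mathbb{Q})$ for every finitely generated abelian group $A$. Hence it suffices to prove both assertions for an exact sequence of finite-dimensional vector spaces, with $\rk$ replaced by $\dim$.

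Write $\varphi_k$ for the map out of $A_k$ and set $B_k = \im\varphi_k$, with $B_0 = 0$ and $B_n = 0$ (the latter because the sequence ends in $0$). Exactness gives $\ker\varphi_k = B_{k-1}$, so the rank--nullity theorem yields $\dim A_k = \dim B_{k-1} + \dim B_k$ for each $k$. For the first claim I would substitute this into $\sum_k (-1)^k \dim A_k$ and reindex; the $B_k$-terms then cancel in consecutive pairs, leaving $0$ (equivalently, one can induct on $n$, splicing off the short exact sequence $0 \to B_{n-2} \to A_{n-1} \to A_n \to 0$ and using the inductive hypothesis on the shorter sequence $0 \to A_1 \to \cdots \to A_{n-2} \to B_{n-2} \to 0$). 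For the inequality I would use the same identity $\dim A_k = \dim B_{k-1} + \dim B_k$ together with the two obvious bounds: $B_{k-1}$ is a quotient of $A_{k-1}$, so $\dim B_{k-1} \le \dim A_{k-1}$, and $B_k$ is a subspace of $A_{k+1}$, so $\dim B_k \le \dim A_{k+1}$; adding these gives $\dim A_k \le \dim A_{k-1} + \dim A_{k+1}$ for $1 < k < n$.

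There is essentially no obstacle in this argument; the only point worth stating carefully is that $\rk$ is additive on short exact sequences of finitely generated abelian groups, which is precisely the exactness of $-\otimes_{\mathbb{Z}}\mathbb{Q}$ (so that ranks can be computed after passing to $\mathbb{Q}$-coefficients). One could instead avoid tensoring and invoke the structure theorem for finitely generated abelian groups directly — splitting into short exact sequences $0 \to B_{k-1} \to A_k \to B_k \to 0$ and noting that torsion does not affect rank — but passing to $\mathbb{Q}$-vector spaces keeps the bookkeeping shortest.
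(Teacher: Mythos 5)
Your proof is correct, and for the inequality it is essentially the paper's argument: both rest on rank additivity across the spliced short exact sequence $0 \to \im \varphi_{k-1} \to A_k \to \im \varphi_k \to 0$, followed by bounding the ranks of the two images by $\rk A_{k-1}$ (image of a quotient map) and $\rk A_{k+1}$ (a subgroup). The only difference is that the paper cites the alternating-sum identity to Rotman's Exercise 3.16, whereas you prove it yourself --- and justify additivity of $\rk$ --- by tensoring with $\mathbb{Q}$ (flatness preserving exactness, $\rk A = \dim_{\mathbb{Q}}(A \otimes \mathbb{Q})$) and telescoping, which is a valid, self-contained alternative.
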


The rank-nullity theorem is a special case of the first fact for $n = 3$.

\begin{proof}
The first claim is Exercise 3.16 of \citep{rotman2008_homologicalAlgebra}. For the second claim, 
note that the following sequence is also exact:
$$0 \to \im \varphi_{k-1} \xrightarrow{i} A_k \xrightarrow{\varphi_k} \im \varphi_k \to 0,$$
where $i$ denotes the inclusion homomorphism. The first claim in the proposition implies
$$\rk A_k = \rk \im \varphi_{k-1} + \rk \im \varphi_k \leq \rk A_{k-1} + \rk A_{k+1}.$$
\end{proof}

To state the long exact sequence for triplets, we need to define \emph{pairs} and \emph{triplets} of simplicial complexes. A 2-tuple $(X, A)$ of simplicial complexes is said to be a pair if $A$ is a subcomplex of $X$, and a 3-tuple $(Y, X, A)$ is said to be a triplet if $(Y, X)$ and $(X, A)$ are two pairs. A simplicial complex $X$ can be identified with the pair $(X, \emptyset)$. The homology group $H_q(X, A)$ of a pair $(X, A)$ at dimension $q$ is called the \emph{relative homology group} of $X$ at dimension $q$ relative to $A$. Its rank is called the (relative) Betti number of the pair and is denoted by $\beta_q(X, A)$.

In homology theory, pairs play the role that quotient vector spaces play in linear algebra. $(X, A)$ acts like the ``quotient" of $X$ by $A$, and $(Y, X)$ acts like the ``quotient" of $(Y, A)$ by $(X, A)$. Precisely, we have the following proposition.

\begin{proposition}[Long Exact Sequence for Triplets; Theorem 7.8 of \citep{giblin_2010_homology}, Exercise 24.1 of \citep{munkres84algtopo}, p.118 of \citep{hatcher02_algtopo}]
For every triplet $(Y, X, A)$, there exists a homomorphism $\partial_q: H_q(Y, X) \to H_{q-1}(X, A)$ for each $q$ such that following sequence is exact
$$... \to H_q(X, A) \to H_q(Y, A) \to H_q(Y, X) \xrightarrow{\partial_q} H_{q-1}(X, A) \to H_{q-1}(Y, A) \to ...,$$
where all unmarked maps are induced by inclusions.
\end{proposition}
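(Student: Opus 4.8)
The plan is to obtain the long exact sequence of the triplet $(Y,X,A)$ from the zig-zag (snake) lemma applied to a short exact sequence of relative simplicial chain complexes, exactly as one does for the long exact sequence of a pair. Write $C_*(\cdot)$ for simplicial chains and, for a pair $(X,A)$, set $C_*(X,A) = C_*(X)/C_*(A)$, whose homology is $H_*(X,A)$. For the triplet we have the inclusions of chain groups $C_*(A) \subseteq C_*(X) \subseteq C_*(Y)$, so the inclusion $C_*(X)\hookrightarrow C_*(Y)$ descends to an injection $i\colon C_*(X,A)=C_*(X)/C_*(A)\hookrightarrow C_*(Y)/C_*(A)=C_*(Y,A)$, with cokernel $C_*(Y)/C_*(X)=C_*(Y,X)$. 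This gives a short exact sequence of chain complexes
$$0 \to C_*(X,A) \xrightarrow{i} C_*(Y,A) \xrightarrow{j} C_*(Y,X) \to 0,$$
where $i$ is induced by the inclusion $X\hookrightarrow Y$ and $j$ is induced by the map of pairs $(Y,A)\hookrightarrow(Y,X)$; degreewise exactness is just the third isomorphism theorem $(C_q(Y)/C_q(A))/(C_q(X)/C_q(A)) \cong C_q(Y)/C_q(X)$.

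Next I would invoke the zig-zag lemma: a short exact sequence of chain complexes induces a long exact sequence in homology, with connecting homomorphisms $\partial_q\colon H_q(Y,X)\to H_{q-1}(X,A)$ defined in the usual way — represent a class by a relative cycle, lift it to a chain $c\in C_q(Y)$, observe that $\partial c\in C_{q-1}(X)$ so that it defines a relative cycle of $(X,A)$, and let $\partial_q$ send the class to $[\partial c]$. It then remains to check that $\partial_q$ is well defined (independent of the chosen representative and lift) and that the resulting six-term pattern is exact at every position; these are the standard diagram chases of the snake lemma. Finally, the two ``unmarked'' maps $H_q(X,A)\to H_q(Y,A)$ and $H_q(Y,A)\to H_q(Y,X)$ of the sequence are precisely the maps induced by $i$ and $j$, hence by the inclusions, as claimed.

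Since this is a classical fact, in practice it is simply cited to \citep{giblin_2010_homology, munkres84algtopo, hatcher02_algtopo}; if one wanted a self-contained argument, the only genuinely nontrivial step is isolating the short exact sequence of chain complexes above, and everything else is the routine bookkeeping of the zig-zag lemma, so the ``hard part'' here is expository rather than mathematical. An alternative would be to assemble the sequence of the triplet by splicing together the long exact sequences of the three pairs $(X,A)$, $(Y,A)$, $(Y,X)$ through a braid-type diagram chase, but that is more cumbersome than the single short exact sequence of chain complexes and offers no real advantage.
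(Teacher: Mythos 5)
Your proposal is correct: the short exact sequence of chain complexes $0 \to C_*(X,A) \to C_*(Y,A) \to C_*(Y,X) \to 0$ together with the zig-zag lemma is exactly the standard argument, and the paper itself offers no proof, simply citing \citep{giblin_2010_homology,munkres84algtopo,hatcher02_algtopo}, where this same construction appears. Nothing further is needed.
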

 
The Mayer-Vietoris sequence is useful for divide-and-conquer arguments, as it relates the homology groups of a space with those of its subspaces.

\begin{proposition}[Mayer-Vietoris Sequence; Theorem 7.2 of \citep{giblin_2010_homology}; Theorem 25.1 of \citep{munkres84algtopo}, p.149 of \citep{hatcher02_algtopo}]
Let $X$ and $Y$ be subcomplexes of a simplicial complex $Z$. Then there exist maps such that the following sequence is exact.
$$... \to H_q(X \cap Y) \to H_q(X) \oplus H_q(Y) \to H_q(X \cup Y) \to H_{q-1}(X \cap Y) \to ...$$
\end{proposition}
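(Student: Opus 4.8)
The plan is to obtain the Mayer-Vietoris sequence from the standard homological-algebra fact that a short exact sequence of chain complexes induces a long exact sequence in homology (the zig-zag lemma; see, e.g., p.117 of \citep{hatcher02_algtopo} or \citep{rotman2008_homologicalAlgebra}). All of the topological content is then confined to exhibiting the correct short exact sequence of simplicial chain complexes, and here the simplicial setting makes the construction entirely elementary: since $X$ and $Y$ are subcomplexes of $Z$, the simplices of $X \cup Y$ are exactly those lying in $X$ or in $Y$, and the simplices of $X \cap Y$ are exactly those lying in both.

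First I would set up, for each dimension $q$, the sequence
$$0 \to C_q(X \cap Y) \xrightarrow{\phi} C_q(X) \oplus C_q(Y) \xrightarrow{\psi} C_q(X \cup Y) \to 0,$$
where $C_q(\cdot)$ denotes the simplicial $q$-chains, $\phi(c) = (c, c)$ is induced by the two inclusions $X \cap Y \hookrightarrow X$ and $X \cap Y \hookrightarrow Y$, and $\psi(a, b) = a - b$ is induced by the inclusions $X \hookrightarrow X \cup Y$ and $Y \hookrightarrow X \cup Y$. Because $\phi$ and $\psi$ are assembled from inclusion-induced chain maps and direct sums, they commute with the boundary operator, so this is a sequence of \emph{chain complexes}, not merely of groups.

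Next I would verify degreewise exactness, which is where the simplicial structure enters. Injectivity of $\phi$ and the relation $\psi \circ \phi = 0$ are immediate. For surjectivity of $\psi$, I would split a chain $z \in C_q(X \cup Y)$ according to whether each simplex in its support lies in $X$: writing $z = a - b$ with $a \in C_q(X)$ the part of $z$ supported on simplices of $X$ and $-b$ the remaining part, which is necessarily supported on simplices of $Y$, gives $\psi(a, b) = z$. For $\ker \psi \subseteq \im \phi$, if $a - b = 0$ in $C_q(X \cup Y)$ then the chain $a$, supported on $X$, equals the chain $b$, supported on $Y$; hence both are supported on simplices of $X \cap Y$ and agree there, yielding a preimage $c \in C_q(X \cap Y)$ with $\phi(c) = (a, b)$.

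Finally I would invoke the zig-zag lemma to pass to the long exact sequence
$$\cdots \to H_q(X \cap Y) \xrightarrow{\phi_*} H_q(X) \oplus H_q(Y) \xrightarrow{\psi_*} H_q(X \cup Y) \xrightarrow{\partial} H_{q-1}(X \cap Y) \to \cdots,$$
with $\partial$ the connecting homomorphism produced by the snake lemma. The only step requiring genuine care is the degreewise exactness of the short exact sequence above, and this is the main obstacle; it is resolved precisely by the combinatorial observation that in a simplicial complex every simplex of a union of subcomplexes belongs to one of the pieces. I would emphasize that this is exactly the point at which the simplicial argument is shorter than the singular one, where surjectivity of $\psi$ fails on the nose and one must first restrict to small simplices via iterated barycentric subdivision.
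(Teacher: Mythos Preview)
Your argument is the standard and correct derivation of the simplicial Mayer--Vietoris sequence via the short exact sequence of chain complexes and the zig-zag lemma. Note, however, that the paper does not supply its own proof of this proposition: it is quoted as background from \citep{giblin_2010_homology,munkres84algtopo,hatcher02_algtopo}, so there is no in-paper argument to compare against. Your write-up matches the textbook proofs cited there.
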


Finally, for those who are concerned about the coefficient groups for homology, we prove our result with coefficients in $\mathbb{Z}$. The same argument works for arbitrary field coefficients. Homological computations of our numerical simulations are done with coefficients in $\mathbb{Z}/2\mathbb{Z}$.

\subsection{Examples of Simplicial Complexes and Subcomplexes}
\label{sec:topo_notations}

In this section, we define a few simplicial complexes and subcomplexes that will appear in the proof.

\begin{figure}[t]
\centering
\begin{tikzpicture}[x={(2cm,0cm)},y={(0cm,2cm)}, thick, scale=0.5]
  \coordinate (A) at (1,0);
  \coordinate (B) at (0,1);
  \coordinate (C) at (-1,0);
  \coordinate (D) at (0,-1);
  
  \draw (A) node[circle,fill,inner sep=2pt] {};
  \draw (B) node[circle,fill,inner sep=2pt] {};
  \draw (C) node[circle,fill,inner sep=2pt] {};
  \draw (D) node[circle,fill,inner sep=2pt] {};
  
  \draw (A) -- (B) -- (C) -- (D) -- cycle;
\end{tikzpicture}
$\qquad$
\begin{tikzpicture}[x={(2cm,0cm)},y={(0cm,2cm)}, thick, scale=0.5]
  \coordinate (A) at (1,0);
  \coordinate (B) at (0,1);
  \coordinate (C) at (-1,0);
  \coordinate (D) at (0,-1);
  \coordinate (E) at (0,0,0);
  
  \draw (A) node[circle,fill,inner sep=2pt] {};
  \draw (B) node[circle,fill,inner sep=2pt] {};
  \draw (C) node[circle,fill,inner sep=2pt] {};
  \draw (D) node[circle,fill,inner sep=2pt] {};
  \draw (E) node[circle,fill,inner sep=2pt] {};
  
  \draw (A) -- (B) -- (C) -- (D) -- cycle;
  \draw (A) -- (E);
  \draw (B) -- (E);
  \draw (C) -- (E);
  \draw (D) -- (E);
\end{tikzpicture}
\caption{Illustrations of $S^1$ (left) and $D^2$ (right).}
\label{fig:circle_disc}
\end{figure}
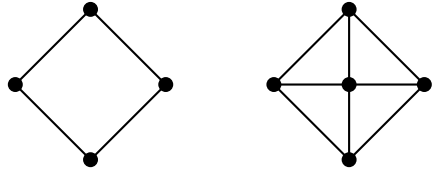

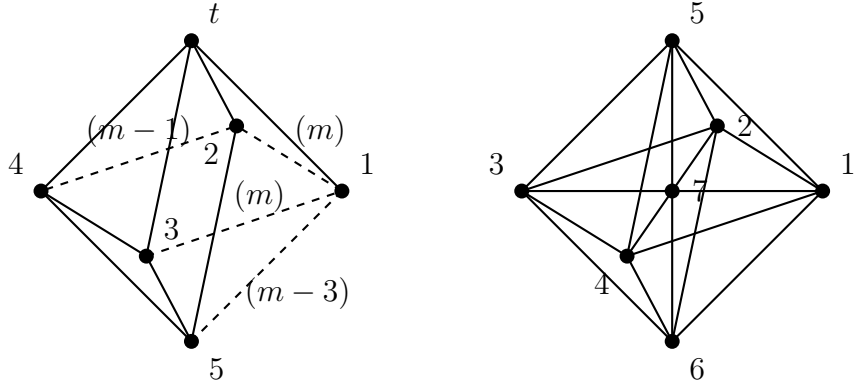
\begin{figure}[t]
\centering
\begin{tikzpicture}[x={(4cm, 0cm)}, y={(1.2cm, 1.732cm)},z={(0cm,4cm)}, thick, scale=0.5]
  \coordinate (A) at (1,0,0);
  \coordinate (B) at (0,1,0);
  \coordinate (C) at (-1,0,0);
  \coordinate (D) at (0,-1,0);
  \coordinate (E) at (0,0,-1);
  \coordinate (F) at (0,0,1);

  \draw (A) node[circle,fill,inner sep=2pt,label=above right:$1$] {};
  \draw (B) node[circle,fill,inner sep=2pt,label=below left:$2$] {};
  \draw (C) node[circle,fill,inner sep=2pt,label=above left:$4$] {};
  \draw (D) node[circle,fill,inner sep=2pt,label=above right:$3$] {};
  \draw (E) node[circle,fill,inner sep=2pt,label=below right:$5$] {};
  \draw (F) node[circle,fill,inner sep=2pt,label=above right:$t$] {};
  
  \draw [dashed] (D) -- node[midway, above]{$\quad (m)$} (A) --node[midway, above]{$\quad \quad (m)$} (B) --node[midway, above]{$(m-1)$} (C);
  \draw (C) -- (D);
  \draw [dashed] (A) --node[midway, below]{$\quad \quad (m-3)$} (E);
  \draw (B) -- (E);
  \draw (C) -- (E);
  \draw (D) -- (E);
  
  \draw (F) -- (A);
  \draw (F) -- (B);
  \draw (F) -- (C);
  \draw (F) -- (D);
  
\end{tikzpicture}
$\qquad$
\begin{tikzpicture}[x={(4cm, 0cm)}, y={(1.2cm, 1.732cm)},z={(0cm,4cm)}, thick, scale=0.5]
  \coordinate (A) at (1,0,0);
  \coordinate (B) at (0,1,0);
  \coordinate (C) at (-1,0,0);
  \coordinate (D) at (0,-1,0);
  \coordinate (E) at (0,0,1);
  \coordinate (F) at (0,0,-1);
  \coordinate (G) at (0,0,0);

  \draw (A) node[circle,fill,inner sep=2pt,label=above right:$1$] {};
  \draw (B) node[circle,fill,inner sep=2pt,label=right:$2$] {};
  \draw (C) node[circle,fill,inner sep=2pt,label=above left:$3$] {};
  \draw (D) node[circle,fill,inner sep=2pt,label=below left:$4$] {};
  \draw (E) node[circle,fill,inner sep=2pt,label=above right:$5$] {};
  \draw (F) node[circle,fill,inner sep=2pt,label=below right:$6$] {};
  \draw (G) node[circle,fill,inner sep=2pt,label=right:$7$] {};
  
  \draw (A) -- (B) -- (C) -- (D) -- cycle;
  \draw (A) -- (E);
  \draw (B) -- (E);
  \draw (C) -- (E);
  \draw (D) -- (E);
  
  \draw (F) -- (A);
  \draw (F) -- (B);
  \draw (F) -- (C);
  \draw (F) -- (D);
  
  \draw (A) -- (G);
  \draw (B) -- (G);
  \draw (C) -- (G);
  \draw (D) -- (G);
  \draw (E) -- (G);
  \draw (F) -- (G);
\end{tikzpicture}
\caption{Illustrations of $S^2$ (left) and $D^3$ (right). The labels and the different line styles for the left illustration are for $\Gamma^{(t)}$ in the proof of \cref{lem:prob_link_contains_sphere}, and those for the right illustration are for \cref{eg:killer}. Labels without parentheses denote node indices in $G(T, \delta, m)$, and labels in parentheses denote edge multiplicity of the dashed edges in $G(T, \delta, m)$.}
\label{fig:sphere_ball}
\end{figure}

We denote by
\begin{itemize}
\item $S^q$ the octahedral $q$-sphere, which has $2(q+1)$ vertices and can be embedded in $\mathbb{R}^{q+1}$ by mapping the vertices to unit vectors on coordinate axes, and
\item $D^{q+1}$ the octahedral $(q+1)$-ball, which is constructed by adding a vertex to $S^q$ and connecting the new vertex with all vertices in $S^q$. It can be embedded in $\mathbb{R}^{q+1}$ by mapping the new vertex to the origin.
\end{itemize}
For instance, $S^1$ and $D^2$, illustrated in \cref{fig:circle_disc}, are the unfilled and filled squares. $S^2$ and $D^3$ are illustrated in \cref{fig:sphere_ball}.
We sometimes abuse notations and denote their underlying graphs by these symbols. Their homology groups are given in \cref{eqn:sphere_betti}.

We also need the notion of \emph{link}. Let $X$ be a simplicial complex.
\begin{itemize}
\item The \emph{star} of a vertex $v$ in $X$, denoted by $\text{St}_X(v)$, is the subcomplex of $X$ consisting of all simplices containing $v$ (and the faces of these simplices). (Our notion of star is called the \emph{closed} star in Definition 3.24 of \citep{giblin_2010_homology} and p.11 of \citep{munkres84algtopo}.)
\item The \emph{link} of a vertex $v$ in $X$, denoted by $\text{Lk}_X(v)$, is the subcomplex of $\text{St}_X(v)$ consisting of all simplices that does \emph{not} contain $v$. (Cf. Definition 3.28 of \citep{giblin_2010_homology} and p.11 of \citep{munkres84algtopo}.)
\end{itemize}

For instance, the link of the central vertex in $D^2$ is $S^1$.

The star of a vertex has the same Betti numbers as $D^{q+1}$, which are stated in \cref{eqn:sphere_betti}, because they are both \emph{contractible}.

\subsection{Minimal Clique Cycles}
\label{sec:topological_facts}

In this section, we state a result, namely \cref{cor:minimallyNontrivialSubcomplex_1skeleton}, about minimal cycles in clique complexes, which will be needed to characterize the $\Gamma_k$'s in the Introduction as the dominating cycles. It is a slight generalization of Lemmas 5.2 and 5.3 in \citep{kahle09_randomCliqueComplex} and Lemma 2.1.4 in \citep{gal05_clique_spheres} to the context of relative homology. Its proof, which is based on an exact-sequence argument, is deferred to \cref{sec:proof_minimallyNontrivialSubcomplex_1skeleton}.

\begin{definition}[Clique-Minimal Complex]\label{def:minimal_complex}
Let $X$ be a clique complex and $A$ be a subcomplex. For $q \geq 0$, $X$ is said to be $(A, q)$-clique-minimal if for every clique subcomplex $Y$ of $X$ that contains $A$, 
$$\beta_q(Y, A) > 0 \text{ if and only if } X = Y.$$ If $A$ is the empty set, we abbreviate $(A, q)$-clique-minimality as $q$-clique-minimality.
\end{definition}

\begin{proposition}\label{cor:minimallyNontrivialSubcomplex_1skeleton}
Let $q \geq 0$ and $A$ be an induced subcomplex of a clique complex $X$ (i.e. $A$ contains a simplex $\sigma$ of $X$ whenever $A$ contains all vertices of $\sigma$). Suppose $X$ is $(A, q)$-clique-minimal. Then
\begin{itemize}
\item if $A$ is just a vertex, then $X$ has at least $2q+2$ vertices; otherwise, $X$ has at least one more vertex than $A$ does; and
\item $\text{deg } v \geq 2q$ for every vertex $v$ in $X$ not in $A$, where $\text{deg}$ denotes the degree of a vertex in the underlying graph of $X$.
\end{itemize}
\end{proposition}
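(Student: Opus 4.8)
I would argue by induction on $q$, proving the two displayed bullets simultaneously with the \emph{sharper} auxiliary claim that any $q$-clique-minimal (i.e.\ $(\emptyset,q)$-clique-minimal) clique complex with $q\ge 1$ has at least $2q+2$ vertices; this strengthening of the $A=\emptyset$ case of the Proposition is what the induction actually needs. The case $q=0$ is immediate: a $(\{a\},0)$-clique-minimal complex must be exactly two isolated vertices (any single vertex is a proper clique subcomplex containing $a$ with $\beta_0(\,\cdot\,,\{a\})=0$, so $X$ can have no third vertex), and when $A$ is not a single vertex the inequality $|V(X)|\ge|V(A)|+1$ is trivial. For a vertex $v$ of a clique complex $X$, write $X\setminus v$ for the induced subcomplex on $V(X)\setminus\{v\}$, which is again a clique complex, as is $\text{Lk}_X(v)$.

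The engine is a \emph{deletion estimate}. If $A$ is an induced subcomplex of $X$ with $v\notin A$, then $A\subseteq X\setminus v$, and the long exact sequence of the triple $(X,X\setminus v,A)$ together with the rank inequality for exact sequences gives $\beta_q(X,A)\le\beta_q(X\setminus v,A)+\beta_q(X,X\setminus v)$. A standard computation identifies the local term: simplicial excision (a consequence of Mayer--Vietoris applied to $X=\text{St}_X(v)\cup(X\setminus v)$, whose intersection is $\text{Lk}_X(v)$) gives $H_q(X,X\setminus v)\cong H_q(\text{St}_X(v),\text{Lk}_X(v))$, and the long exact sequence of the pair $(\text{St}_X(v),\text{Lk}_X(v))$ together with the contractibility of the (closed) star yields $H_q(\text{St}_X(v),\text{Lk}_X(v))\cong\widetilde H_{q-1}(\text{Lk}_X(v))$. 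Hence $\beta_q(X,A)\le\beta_q(X\setminus v,A)+\widetilde\beta_{q-1}(\text{Lk}_X(v))$. Now if $X$ is $(A,q)$-clique-minimal and $v\in V(X)\setminus V(A)$, then $X\setminus v$ is a proper clique subcomplex containing $A$ (here I use that $A$ is induced, so deleting $v$ does not disturb $A$), so minimality forces $\beta_q(X\setminus v,A)=0$; since $\beta_q(X,A)>0$ (take $Y=X$ in the definition), the estimate forces $\widetilde\beta_{q-1}(\text{Lk}_X(v))>0$. For $q=1$ this says $\text{Lk}_X(v)$ is disconnected, so has $\ge 2$ vertices, i.e.\ $\deg v\ge 2$. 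For $q\ge 2$ we get $\beta_{q-1}(\text{Lk}_X(v))>0$, so the clique complex $\text{Lk}_X(v)$ contains an inclusion-minimal clique subcomplex $W$ with $\beta_{q-1}(W)>0$; then $W$ is $(q-1)$-clique-minimal, so by the auxiliary claim at level $q-1$ it has $\ge 2(q-1)+2=2q$ vertices, whence $\deg v=|V(\text{Lk}_X(v))|\ge 2q$. This establishes the second bullet.

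For the first bullet: if $A$ is not a single vertex, $\beta_q(X,A)>0$ forces $A\ne X$, and $A$ induced then forces $V(A)\subsetneq V(X)$, i.e.\ $|V(X)|\ge|V(A)|+1$. If $A=\{a\}$ and $q\ge 1$, then $H_q(X,\{a\})\cong H_q(X)$, so $\beta_q(X)>0$; a short minimality argument shows $X$ is either connected or a disjoint union of $\{a\}$ with a $q$-clique-minimal complex, and in the latter case the auxiliary claim gives $|V(X)|\ge 2q+3$. In the connected case every vertex other than $a$ has degree $\ge 2q$ by the bullet just proved, so $|V(X)|\ge 2q+1$; and $|V(X)|=2q+1$ is impossible, since then each vertex $\ne a$ would be adjacent to all others, making $\text{St}_X(a)$, the full simplex on $V(X)\setminus\{a\}$, and their (nonempty) intersection all contractible, so Mayer--Vietoris would give $\widetilde H_q(X)=0$, contradicting $\beta_q(X)>0$. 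Hence $|V(X)|\ge 2q+2$. The same ``minimum degree $2q$ on $2q+1$ vertices forces a contractible full simplex'' argument, run with $A=\emptyset$ (and with connectedness of a $q$-clique-minimal complex, which holds because a component carrying $H_q$ would otherwise be a smaller witness), proves the auxiliary claim at level $q$.

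The main obstacle I anticipate is extracting the \emph{exact} constants $2q$ and $2q+2$ rather than $2q-1$ and $2q+1$: a Mayer--Vietoris estimate carried out relative to $A$ directly loses a vertex in the worst cases (roughly, when the induced subcomplex $\text{Lk}_X(v)\cap A$ is an octahedral sphere), and the remedy is exactly the observation above that the triple exact sequence $(X,X\setminus v,A)$ funnels \emph{all} the relative data into the single \emph{absolute} complex $\text{Lk}_X(v)$, so that one may invoke the sharp absolute count ($2q$ vertices for a $(q-1)$-clique-minimal complex) instead of the weaker relative one. Keeping the induction non-circular requires ordering the steps at each level $q$ as: (i) prove the degree bound from the auxiliary count at level $q-1$; (ii) deduce the auxiliary count at level $q$ from the degree bound at level $q$; (iii) deduce the $A=\{a\}$ bound from (i) and (ii); the ``$|V(X)|\ge|V(A)|+1$'' clause is independent of all of this. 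The only topological inputs beyond the exact sequences already recalled in the paper are the contractibility of stars and the local-homology identification $H_q(X,X\setminus v)\cong\widetilde H_{q-1}(\text{Lk}_X(v))$.
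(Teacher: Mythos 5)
Your proposal is correct, and its engine is the same as the paper's: the long exact sequence of the triple $(X, X - v, A)$, minimality killing $H_q(X-v,A)$, excision identifying $H_q(X, X-v)$ with $H_q(\text{St}_X(v), \text{Lk}_X(v))$, and contractibility of the star pushing the nonvanishing down to the link — this is precisely the commutative diagram in the paper's proof of \cref{prop:kahle_minimal_sphere_clique_complexes}. Where you genuinely diverge is in the bookkeeping of the induction. The paper proves a stronger relative statement, $\beta_{q-1}(\text{Lk}_X(v), B) > 0$ for any acyclic $B \subseteq \text{Lk}_X(v) \cap A$, exactly so that it can induct on the single-vertex relative count: the link of a neighbor of $a$ contains an $(\{a\}, q-1)$-clique-minimal subcomplex with at least $2q$ vertices, and then $X \neq \text{St}_X(v)$ forces $2q+2$. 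You instead keep only the absolute (reduced) statement $\widetilde\beta_{q-1}(\text{Lk}_X(v)) > 0$ — essentially the original Gal--Kahle lemma — and induct on the absolute count, recovering $2q+2$ from the minimum-degree bound together with the observation that minimum degree $2q$ on $2q+1$ vertices forces a complete graph, hence a contractible full simplex; the single-vertex case is then handled via $H_q(X,\{a\}) \cong H_q(X)$ for $q \geq 1$ and the connected/disconnected split (your isolated-$a$ case matches the paper's). The trade-off: your route is leaner, never needing links relative to an acyclic subcomplex, at the cost of a little extra graph-theoretic counting and the careful (i)--(ii)--(iii) ordering you spell out to keep the induction non-circular; the paper's route yields the more general relative lemma, which it advertises as its contribution, although for \cref{cor:minimallyNontrivialSubcomplex_1skeleton} itself your argument shows the $B=\emptyset$ case suffices. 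The auxiliary facts you invoke (that $X - v$ and $\text{Lk}_X(v)$ are again clique complexes, and that a minimal clique subcomplex with positive Betti number is clique-minimal) are exactly \cref{lem:clique_closure} and the extraction step the paper also uses, so no gap there.
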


\begin{remark}
Note that a clique-minimal complex is not necessarily minimal (defined by dropping all instances of ``clique" in the definition above). For example, let $A$ be a triangulated annulus whose inner boundary has three edges. Consider the double cover $T$ of $A$, which is formed by gluing together two identical copies of $A$ along the boundaries. Let $T'$ be the space formed by gluing a triangle to the inner boundary of $A$ in $T$. Then $T$ is 2-minimal but not 2-clique minimal. $T'$ is not 2-minimal, but it is 2-clique minimal, if the triangulation of $A$ is nice enough.
\end{remark}

\section{A Decomposition Result}
\label{sec:main_idea}

In this section, we begin our proof of \cref{thm:expected_BettiNumbers} by establishing a decomposition result, \cref{prop:decomposition}, which estimates the Betti numbers of the whole complex in terms of the Betti numbers of its links. After stating the proposition, we describe its relationship with the discussion on dominating cycles in the Introduction. We conclude the section by proving the proposition.

Let $X$ be a clique complex with vertices $\{1, ..., T\}$.

\begin{itemize}
\item Let $X^{(t)}$ be the subcomplex of $X$ such that $X^{(t)}$ consists of all simplices of $X$ whose vertices are all in $\{1, ..., t\}$. 
\item Let $L^{(t)}$ be the link of $t$ in $X^{(t)}$, 
and $f^{(t)}: L^{(t)} \to X^{(t-1)}$ be the inclusion map ($L^{(t)} \subseteq X^{(t-1)}$ because $t$ itself does not lie in its link).
\item \new{For a subcomplex $S$ of $X$, a nonnegative integer $q$, nodes $s, t$ of $X$ such that all indices of nodes in $S$ are smaller than $s$, and $s < t$, let $\mathcal{S}(S, q, s, t)$ be the event where
	\begin{itemize}
	\item $S$ is isomorphic to $S^{q-1}$, and
	\item $S \subseteq L^{(s)} \cap L^{(t)}$.
	\end{itemize}}
\end{itemize}

\new{
Recall that $S^{q-1}$ is the sphere in \cref{sec:topo_notations}.
Note that when $\mathcal{S}(S, q, s, t)$ happens, $X$ contains a $q$-dimensional sphere with $S$ as the equator and $s$ and $t$ as the poles.}

We need the following terms for our estimate.
\begin{align*}
\ell^{(t)}\new{(S, s)} &= \mathbf{1}[\mathcal{S}\new{(S, q, s, t)}]
\\
b^{(t)}_{IK}\new{(S, s)} &= \mathbf{1}[\mathcal{S}\new{(S, q, s, t)}] \mathbf{1}[\beta_q(L^{(t)}, \new{S}) > 0]
\\
b^{(t)}_{KL} &= \beta_q(L^{(t)})
\\
u^{(t)} &= \beta_{q-1}(L^{(t)}),
\end{align*}
where $\mathbf{1}[\Lambda]$ denotes the indicator of the event $\Lambda$, and $f^{(t)}_{q-1}$ is the homomorphism between homology groups at dimension $q-1$ induced by the map $f^{(t)}$.
Technically these terms should be subscripted by $q$, but we drop it since we will not change the dimension $q$.

The letters $u$ and $\ell$ stand for upper and lower bounds, and $b$ stands for boundary. The subscripts $IK$ and $KL$ denote the two types of boundaries mentioned in the Introduction, and we will explain them below. Note that despite their notational difference, $u^{(t)}$ and $b^{(t)}_{KL}$ are just Betti numbers of $L^{(t)}$ of different dimensions. They will be estimated in the same way in \cref{lem:link_BettiNumber}.

We now state our decomposition result.

\begin{proposition}\label{prop:decomposition}
Let $X$ be a clique complex. 
\new{
Let $q \geq 2$, $S$ be a subcomplex of $X$, and $s$ be a node whose label is larger than all node labels in $S$.
}
Then
\begin{itemize}
\item $\sum_{t \leq T} \left[\rk \ker f_{q-1}^{(t)} - \beta_q(L^{(t)})\right] \leq \beta_q(X) \leq \sum_{t \leq T} \beta_{q-1}(L^{(t)})$;
\item \new{
Suppose $\mathcal{S}(S,q,s,t)$ happens. Let $j: S \to L^{(t)}$ be the inclusion map. If $\rk \ker j_{q-1} = 0$, then $\rk \ker f^{(t)}_{q-1} \geq 1$; and
}
\item $\sum_{\new{s < t} \leq T} (\ell^{(t)}\new{(S,s)} - b^{(t)}_{IK}\new{(S, s)}) - \new{\sum_{t \leq T}} b^{(t)}_{KL} \leq \beta_q(X) \leq \sum_{t \leq T} u^{(t)}.$
\end{itemize}
\end{proposition}

\new{The most important statement is the last bullet point. The first bullet point is merely a stepping stone to it. The second bullet point, while also a stepping stone, will also be used in our discussion on simulation results.}

Returning to the discussion of $\Gamma_k$'s in the Introduction, one may check that $u^{(t)} = 1$ on the event where $L^{(t)}$ is a square or a pentagon. In the former case the square along with vertex $t$ gives a copy of $\Gamma_1$. We will show that the count of squares is the dominating term for $u^{(t)}$ in expectation at the end of the proof of \cref{lem:link_BettiNumber}. Similarly, $\ell^{(t)}$ counts a subset of copies of $\Gamma_1$'s in the complex.

$KL$ stands for ``KilL", as a cycle is killed in the homology group when a boundary is formed. We give an example where $b_{KL}^{(t)} = 1$.

\begin{example}[Kill]\label{eg:killer}
Let $q = 2$ and $t = 7$. Let $X^{(6)} \cong S^2$, and $X^{(7)}$ be the clique complex with vertex 7 connected to $1, 2, 3, 4, 5, 6$. The underlying graph of $X^{(7)}$ is shown on the right panel of \cref{fig:sphere_ball}. Then $X^{(7)} \cong D^3$. The Betti number at dimension 2 drops by 1 when the new simplices are added.
\end{example}

$IK$ stands for ``Instant Kill", as the defining event happens when a new cycle is killed soon as it forms. We give an example where $b_{IK}^{(t)} = 1$.

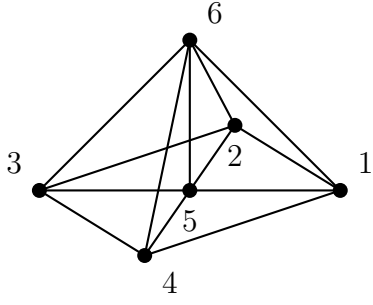
\begin{figure}[t]
\centering
\begin{tikzpicture}[x={(4cm, 0cm)}, y={(1.2cm, 1.732cm)},z={(0cm,4cm)}, thick, scale=0.5]
  \coordinate (A) at (1,0,0);
  \coordinate (B) at (0,1,0);
  \coordinate (C) at (-1,0,0);
  \coordinate (D) at (0,-1,0);
  \coordinate (E) at (0,0,0);
  \coordinate (F) at (0,0,1);

  \draw (A) node[circle,fill,inner sep=2pt,label=above right:$1$] {};
  \draw (B) node[circle,fill,inner sep=2pt,label=below:$2$] {};
  \draw (C) node[circle,fill,inner sep=2pt,label=above left:$3$] {};
  \draw (D) node[circle,fill,inner sep=2pt,label=below right:$4$] {};
  \draw (E) node[circle,fill,inner sep=2pt,label= below:$5$] {};
  \draw (F) node[circle,fill,inner sep=2pt,label=above right:$6$] {};
  
  \draw (A) -- (B) -- (C) -- (D) -- cycle;
  \draw (A) -- (E);
  \draw (B) -- (E);
  \draw (C) -- (E);
  \draw (D) -- (E);
  
  \draw (F) -- (A);
  \draw (F) -- (B);
  \draw (F) -- (C);
  \draw (F) -- (D);
  \draw (F) -- (E);
  
\end{tikzpicture}
\caption{Illustration for the underlying graph of $X^{(6)}$ in \cref{eg:miscarriage}.}
\label{fig:miscarriage}
\end{figure}

\begin{example}[Instant Kill]\label{eg:miscarriage}
Let $q = 2$ and $t = 6$. Let $X^{(5)} \cong D^2$ with vertices $1,2,3,4$ on the boundary and vertex $5$ in the center. \new{Let $S = X^{(4)}$.} Let $X^{(6)}$, which is illustrated in \cref{fig:miscarriage}, be the clique complex with vertex $6$ connected to $1, 2, 3, 4, 5$. The addition of new simplices creates a 2-cycle, namely the sum of the two copies of (the clique complex of) $\Gamma_1$ spanned by $1, 2, 3, 4, 5$ and $1, 2, 3, 4, 6$. However, this does not add to the Betti number, since the new cycle is also the boundary of the sum of all tetrahedra in $X^{(6)}$. One may check that indeed $b_{IK}^{(6)}\new{(X^{(4)},5)} = 1$, as the exact sequence of the pair $(L^{(6)}, X^{(4)})$ shows $H_2(L^{(6)}, X^{(4)}) \cong H_1(X^{(4)}) \cong \mathbb{Z}$.
\end{example}

We now prove \cref{prop:decomposition}.

\begin{proof}[Proof of \cref{prop:decomposition}]

It suffices to show the first two claims. The last claim is a straight-forward corollary.

For the first claim, let $C$ be the star of $t$ in $X^{(t)}$. Then $C$ is contractible. Consider the Mayer-Vietoris sequence for the decomposition $X^{(t)} = X^{(t-1)} \cup C$:
$$H_q(L^{(t)}) \xrightarrow{f^{(t)}_q} H_q(X^{(t-1)}) \to H_q(X^{(t)}) \to H_{q-1}(L^{(t)}) \xrightarrow{f^{(t)}_{q-1}} H_{q-1}(X^{(t-1)}),$$
where the trivial summands $H_*(C) \cong 0$ are suppressed.
Hence we have the short exact sequence
$$
0 \to \im f^{(t)}_q \to H_q(X^{(t-1)}) \to H_q(X^{(t)}) \to 
\ker f^{(t)}_{q-1} \to 0.$$
Since the alternating sum of ranks vanishes, we have
$$\rk  H_q(X^{(t)}) - \rk  H_q(X^{(t-1)}) = \rk \ker f^{(t)}_{q-1} - \rk \im f^{(t)}_q,$$
where $\rk \ker f^{(t)}_{q-1} \leq \rk H_{q-1}(L^{(t)}) = \beta_{q-1}(L^{(t)})$ because the latter group contains the former, and $\rk \im f^{(t)}_{q} \leq \beta_{q}(L^{(t)})$ by the rank-nullity theorem.
The first claim then follows by summing over $t$.

For the second claim, \new{c}onsider the following commutative diagram. 
$$\begin{tikzcd}
... \to H_{q}(L^{(t)}, \new{S}) \ar[r, "\partial"]
& H_{q-1}(\new{S}) \ar[r, "j_{q-1}"] \ar[d, "0"]
& H_{q-1}(L^{(t)}) \ar[d, "f^{(t)}_{q-1}"]
\\
&H_{q-1}(\new{\text{St}_{X^{(t-1)}}(s)}) \ar[r] & H_{q-1}(X^{(t-1)}).
\end{tikzcd}$$

\new{
The maps in the rectangle are all induced by inclusion.  The top row comes from the exact sequence for the pair $(L^{(t)}, \new{S})$. The left vertical map is $0$ because $\text{St}_{X^{(t-1)}}(s)$ is acyclic (Theorem 6.8 of \citep{giblin_2010_homology}, Theorem 8.2 of \citep{munkres84algtopo})}.

\new{
Commutativity of the rectangle then implies $\im j_{q-1}$ lies in $\ker f^{(t)}_{q-1}$, and hence
$$\rk \ker f^{(t)}_{q-1} \geq \rk \im j_{q-1} = 1 - \rk \ker j_{q-1} = 1,$$
where the first equality follows from rank-nullity theorem (recall $\beta_{q-1}(S) = 1$), and the second one holds by assumption. The second bullet point then follows.


For the last claim,
exactness of the top row at $H_{q-1}(S)$ gives
\begin{equation}\label{bound_relative_Betti_num}
\rk \ker j_{q-1} = \rk \im \partial \leq \rk H_q(L^{(t)}, S).
\end{equation}
Hence,
\begin{align*}
\rk \ker f_{q-1}^{(t)} &\geq \rk \ker f_{q-1}^{(t)} \mathbf{1}[\mathcal{S}(S,q,s,t)]\mathbf{1}[\rk \ker j_{q-1} = 0]
\\&\geq \mathbf{1}[\mathcal{S}(S,q,s,t)]\mathbf{1}[\rk \ker j_{q-1} = 0] & \text{(by second claim)}
\\&= \mathbf{1}[\mathcal{S}(S,q,s,t)](1-\mathbf{1}[\rk \ker j_{q-1} > 0])
\\&\geq \mathbf{1}[\mathcal{S}(S,q,s,t)](1-\mathbf{1}[\beta_q(L^{(t)},S) > 0]) & \text{(by \cref{bound_relative_Betti_num})}.
\end{align*}
The last claim then follows from applying the first claim and plugging in definitions.}
\end{proof}

\section{Proof of \cref{thm:expected_BettiNumbers}}
\label{sec:proof_main_result}

It remains to estimate the expectations of all the terms in \cref{prop:decomposition} for the preferential attachment complex \new{for some choice of $S$ and $s$. Throughout our proof, we fix $S = X^{(2q)}$ and $s = 2q+1$.}

A direct application of \cref{prop:subgraphsProbs_preferentialAttachment} gives a lower bound on $\sum E[\ell^{(t)}\new{(X^{(2q)}, 2q+1)}]$.

\begin{lemma}\label{lem:prob_link_contains_sphere}
Consider the preferential attachment complex $X = X(T, \delta, m)$. Let $q \geq 0$ and suppose $m \geq 2q$. Then
$$\sum_{t \leq T} E[\ell^{(t)}\new{(X^{(2q)}, 2q+1)}] = 
\begin{cases}
\Omega(T^{1 - 2q\chi(\delta, m)}) & \text{ if } 1 - 2q\chi(\delta, m) > 0 \\
\Omega(\log T) & \text{ if } 1 - 2q\chi(\delta, m) = 0
\end{cases}
$$
where the big-$\Omega$ constants depend only on $q, \delta, m$.
\end{lemma}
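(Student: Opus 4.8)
The plan is to exhibit, for each index $t$, a fixed multigraph $\Gamma^{(t)} \subseteq U(T,m)$ with the property that $\{G(T,\delta,m) \text{ contains } \Gamma^{(t)}\} \subseteq (\mathcal{S}^q)^{(t)}$, and then to read off $P(G(T,\delta,m) \text{ contains } \Gamma^{(t)})$ from \cref{prop:subgraphsProbs_preferentialAttachment}. For $q \geq 2$ (the cases $q \leq 1$ being degenerate and anyway not needed for the theorem), I would build $\Gamma^{(t)}$ on the vertex set $\{1,\dots,2q\}\cup\{2q+1,t\}$ as follows: place a copy of the octahedral sphere $S^{q-1}$ on $\{1,\dots,2q\}$, choosing the antipodal pairing so that $\{1,2\}$ is an edge; join both $2q+1$ and $t$ to all of $1,\dots,2q$; orient every edge from the larger to the smaller index; and finally, for each $j\in\{2,\dots,2q\}$, append repeated copies of the edge from $j$ to its smallest $S^{q-1}$-neighbour until $\outdeg_{\Gamma^{(t)}}(j)=m$. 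The out-degrees of $\Gamma^{(t)}$ are then $0$ at vertex $1$, equal to $2q$ at vertices $2q+1$ and $t$, and equal to $m$ at the remaining sphere vertices; since the $S^{q-1}$-out-degree of any vertex is at most $2q-2$ and $m\ge 2q$, all out-degrees are $\le m$, so \cref{prop:subgraphsProbs_preferentialAttachment} applies. (For $q=2$ this $\Gamma^{(t)}$ is precisely the graph drawn in \cref{fig:sphere_ball}.)

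Next I would verify the containment $\{G(T,\delta,m)\text{ contains }\Gamma^{(t)}\}\subseteq(\mathcal{S}^q)^{(t)}$. Each $j\in\{2,\dots,2q\}$ emits exactly $m$ edges in $G(T,\delta,m)$ and exactly $m$ edges in $\Gamma^{(t)}$, so containing $\Gamma^{(t)}$ forces the out-neighbourhood of $j$ in $G_\text{simple}$ to coincide with its out-neighbourhood in $\Gamma^{(t)}$; together with vertex $1$ emitting no edges, this pins $G_\text{simple}(2q,\delta,m)$, hence $X^{(2q)}$, to be exactly $S^{q-1}$. Containing $\Gamma^{(t)}$ also makes $2q+1$ and $t$ adjacent to all of $1,\dots,2q$: the former is the second clause of $(\mathcal{S}^q)^{(t)}$, and the latter gives $X^{(2q)}\subseteq L^{(t)}$, since in a clique complex the link of $t$ contains $X^{(2q)}$ exactly when $t$ is adjacent to every vertex of $X^{(2q)}$. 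Hence $E[\ell^{(t)}] = P((\mathcal{S}^q)^{(t)}) \ge P(G(T,\delta,m)\text{ contains }\Gamma^{(t)})$.

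Now I would apply \cref{prop:subgraphsProbs_preferentialAttachment} to $\Gamma^{(t)}$. Its in-degree sequence does not depend on $t$, so the big-$\Theta$ constants are uniform in $t$, and the product $\prod_{v=1}^{2q+1} v^{p_{\Gamma^{(t)}}(v)}$ over the fixed vertices is a positive constant depending only on $q,\delta,m$; therefore $P(G(T,\delta,m)\text{ contains }\Gamma^{(t)}) = \Theta\big(t^{\,p_{\Gamma^{(t)}}(t)}\big)$. Since $t$ has in-degree $0$ and out-degree $2q$ in $\Gamma^{(t)}$, the definition of the power gives $p_{\Gamma^{(t)}}(t) = -\left[2q + (0-2q)(1-\chi(\delta,m))\right] = -2q\,\chi(\delta,m)$. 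So there is $c = c(q,\delta,m)>0$ with $E[\ell^{(t)}]\ge c\,t^{-2q\chi(\delta,m)}$ for all sufficiently large $t$.

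Finally I would sum over $t$. Because $2q\chi(\delta,m)\in(0,q)$, the exponent $-2q\chi(\delta,m)$ lies strictly in $(-1,0)$ when $1-2q\chi(\delta,m)>0$ and equals $-1$ when $1-2q\chi(\delta,m)=0$; comparing $\sum_{t\le T} t^{-2q\chi(\delta,m)}$ with the corresponding integral then yields $\Theta(T^{1-2q\chi(\delta,m)})$ in the first case and $\Theta(\log T)$ in the second, and discarding the finitely many small-$t$ terms (which are nonnegative) gives the stated $\Omega$ bounds, with constants depending only on $q,\delta,m$. I do not expect a genuine obstacle here; the one point requiring care is the design of $\Gamma^{(t)}$ — specifically padding the sphere vertices' out-degrees up to $m$ — so that mere containment of $\Gamma^{(t)}$ forces $X^{(2q)}\cong S^{q-1}$ rather than only $X^{(2q)}\supseteq S^{q-1}$, which is exactly where the hypothesis $m\ge 2q$ is used.
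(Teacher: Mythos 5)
Your proposal is correct and follows essentially the same route as the paper: a fixed witness multigraph $\Gamma^{(t)}$ whose containment forces $(\mathcal{S}^q)^{(t)}$, the probability estimate from \cref{prop:subgraphsProbs_preferentialAttachment} with $p_{\Gamma^{(t)}}(t)=-2q\chi(\delta,m)$ and constants uniform in $t$, followed by an integral-test summation. The only cosmetic differences are that the paper also pads the out-degree of vertex $2q+1$ up to $m$ (so for $q=2$ your graph is not literally the one in \cref{fig:sphere_ball}, which is immaterial for the bound), and your restriction to $q\ge 2$ matches what is actually needed in \cref{thm:expected_BettiNumbers}.
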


\begin{proof}
For each $t > 2q + 1$, let $\Gamma^{(t)}$ be a subgraph of $U(t, m)$ (possibly with repeated edges) with the following properties:
\begin{itemize}
\item the vertices are $1$, ..., $2q + 1$ and $t$;
\item each of $t$ and $2q +1$ is connected to each of $1$, ..., $2q$, and $t$ is not connected to $2q+1$;
\item all edges incident on $t$ are simple;
\item the out-degree of every vertex other than $1$ and $t$ is $m$; and
\item removing $t$ and edges incident on $t$ and replacing repeated edges with simple edges gives the underlying graph of $D^{q}$.
\end{itemize}
For example, for $q = 2$, $\Gamma^{(t)}$, can be the graph illustrated on the left panel of \cref{fig:sphere_ball}, which by definition has the following edges:
\begin{itemize}
\item $m$ edges between $1$ and $2$ and $m$ edges between $1$ and $3$,
\item $m-1$ edges between $4$ and $2$ and 1 edge between $4$ and $3$,
\item $m-3$ edges between $5$ and $1$ and 1 edge between $5$ and each of $2, 3, 4$, and
\item 1 edge between $t$ and each of $1, 2, 3, 4$.
\end{itemize}

The $\Gamma^{(t)}$'s can be chosen to be isomorphic to each other. Then \new{$\mathcal{S}(X^{(2q)}, q, 2q+1, t)$} holds for $X(T, \delta, m)$ whenever $X(T, \delta, m)$ contains $\Gamma^{(t)}$.

To simplify notations, let \new{$p(v) = p_{\Gamma^{(t)}}(v)$ for every vertex $v$ of $\Gamma^{(t)}$}. Note that $p(t) = -2q\chi(\delta, m).$ By \cref{prop:subgraphsProbs_preferentialAttachment}, we have
$$P[\new{\mathcal{S}(X^{(2q)}, q, 2q+1, t)}] = \Omega(t^{p(t)} \prod_{k \leq 2q+1} k^{p(k)} ) = \Omega (t^{p(t)}) = \Omega(t^{-2q\chi(\delta, m)}).$$
Summing over $t$ and applying integral test gives the desired result.
\end{proof}

Next we use \cref{thm:subgraphsCounts_preferentialAttachment,cor:minimallyNontrivialSubcomplex_1skeleton} to estimate $\sum E[u^{(t)}], \sum E[b_{KL}^{(t)}]$ and $\sum E[b^{(t)}_{IK}\new{(X^{(2q)}, 2q+1)}]$, but before that, we need an auxiliary lemma to simplify the application of \cref{thm:subgraphsCounts_preferentialAttachment}.

\begin{lemma}\label{lem:PAM_count_sequence_increment}
Let $\Gamma$ be a subgraph of $U(T, m)$ with vertex set $V_\Gamma = \{v_1 < ... < v_{|V_H|}\}$. Let $d_k$ and $\indeg_k$ be the total degree (sum of in- and out-degrees) and the in-degree of node $v_k$ with respect to $\Gamma$.
Then the sequence $(a_k)$ in \cref{thm:subgraphsCounts_preferentialAttachment} satisfies
$$a_k - a_{k-1}
\begin{cases}
= d_k\chi(\delta, m) - 1 & \text{ if } \indeg_k = 0\\
\geq (d_k - 2) \chi(\delta, m) & \text{ if } \indeg_k > 0.
\end{cases}$$ 
\end{lemma}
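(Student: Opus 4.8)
The plan is to compute $a_k - a_{k-1}$ directly from the defining formula \cref{eqn:PAM_count_sequence}, namely $a_k = |V_\Gamma| - k + \sum_{l > k} p_\Gamma(v_l)$. Subtracting consecutive terms, the combinatorial part $|V_\Gamma| - k$ contributes $-1$, and the sum $\sum_{l>k} p_\Gamma(v_l)$ loses exactly the single term $p_\Gamma(v_k)$ when passing from index $k-1$ to $k$. Hence $a_k - a_{k-1} = -1 - p_\Gamma(v_k)$. So everything reduces to bounding $-p_\Gamma(v_k)$ in terms of the total degree $d_k$ and the in-degree $\indeg_k$ of $v_k$.

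Next I would unwind the definition of the power. Writing $\chi = \chi(\delta,m)$, $i = \indeg_k$, $o = \outdeg_k$, and $d_k = i + o$, the second displayed form in the Definition of Degree and Power gives
$$-p_\Gamma(v_k) = (1-\chi) i + \chi o = (1-\chi) i + \chi(d_k - i) = \chi d_k + (1 - 2\chi) i.$$
In the case $\indeg_k = 0$ we have $i = 0$, so $-p_\Gamma(v_k) = \chi d_k$ exactly, and therefore $a_k - a_{k-1} = \chi d_k - 1 = d_k \chi(\delta,m) - 1$, which is the first case. In the case $\indeg_k > 0$ we have $i \geq 1$; since $\chi(\delta,m) \in (0, 1/2)$ by \cref{eqn:chi}, the coefficient $1 - 2\chi$ is strictly positive, so $(1-2\chi) i \geq 1 - 2\chi$, giving
$$a_k - a_{k-1} = \chi d_k + (1-2\chi) i - 1 \geq \chi d_k + (1 - 2\chi) - 1 = \chi d_k - 2\chi = (d_k - 2)\chi(\delta,m),$$
which is the second case. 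This completes the argument.

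There is really no hard step here — it is a two-line manipulation once the power is rewritten in the $(1-\chi)\indeg + \chi\outdeg$ form. The only thing to be careful about is the sign convention in the definition of $p_\Gamma$ (it is defined as a negative quantity, so $a_k = |V_\Gamma| - k + \sum_{l>k} p_\Gamma(v_l)$ with negative summands), and the use of the strict inequality $\chi(\delta,m) < 1/2$, which is exactly what makes the coefficient $1 - 2\chi$ positive so that dropping $i$ down to its minimum value $1$ yields a genuine lower bound rather than an upper bound. One should also note the implicit assumption (inherited from \cref{thm:subgraphsCounts_preferentialAttachment}, which this lemma feeds into) that out-degrees are bounded by $m$; it is not needed for the algebraic identity itself but keeps the setting consistent.
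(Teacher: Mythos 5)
Your proof is correct and is exactly the ``direct verification'' the paper has in mind: compute $a_k - a_{k-1} = -1 - p_\Gamma(v_k)$, rewrite $-p_\Gamma(v_k) = \chi d_k + (1-2\chi)\indeg_k$, and use $\chi(\delta,m) \in (0,1/2)$ to drop $\indeg_k$ to $1$ in the second case. Nothing further is needed.
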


\begin{proof}
Direct verification.
\end{proof}

The next lemma gives a matching upper bound on $\sum E[u^{(t)}]$ and an upper bound on $\sum E[b_{DR}^{(t)}]$ with a smaller order of magnitude.

\begin{lemma}\label{lem:link_BettiNumber}
Consider the preferential attachment complex $X = X(T, \delta, m)$. Let $q \geq 1$ and suppose $m \geq 2(q + 1)$. Then
$$\sum_{t \leq T} E[\beta_{q}(L^{(t)})] = \begin{cases}
O(T^{1 - 2(q+1)\chi(\delta, m)}) & \text{ if } 1 - 2(q+1)\chi(\delta, m) > 0 \\
O(\log T) & \text{ if } 1 - 2(q+1)\chi(\delta, m) = 0 \\
O(1) & \text{ otherwise,}
\end{cases}$$
where the big-Oh constants depend only on $q, \delta, m$.
\end{lemma}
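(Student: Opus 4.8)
The plan is to bound $\beta_q(L^{(t)})$ by the number of certain small subgraphs living in the link of $t$, and then apply the subgraph-counting estimate of \cref{thm:subgraphsCounts_preferentialAttachment} together with \cref{lem:PAM_count_sequence_increment}. The starting point is \cref{cor:minimallyNontrivialSubcomplex_1skeleton}: any nonzero class in $H_q(L^{(t)})$ is supported on a $q$-clique-minimal subcomplex $Y \subseteq L^{(t)}$, which by the corollary has at least $2q+2$ vertices, each of degree at least $2q$ \emph{within $Y$}. Since $\beta_q(L^{(t)})$ is at most the number of such minimal subcomplexes $Y$ (a crude but sufficient bound — each contributes at least one to the Betti number, and a maximal independent set of cycles can be chosen with pairwise distinct supports, or more simply one just sums the contributions), it suffices to count, for each $t$, the expected number of subgraphs of $G_{\mathrm{simple}}(T,\delta,m)$ of the following shape: a vertex $t$, together with $\ell \geq 2q+2$ vertices $u_1,\dots,u_\ell$ all adjacent to $t$ (so that they lie in $\mathrm{Lk}(t)$), such that the induced graph on $\{u_1,\dots,u_\ell\}$ has minimum degree $\geq 2q$. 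One then sums over $t \leq T$.

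The key computation is to show that every such configuration $\Gamma$ (on vertex set $\{t\} \cup \{u_1 < \dots < u_\ell\}$, with all out-degrees capped at $m$, which is legitimate since $m \geq 2(q+1)$) has counting exponent $A \leq 1 - 2(q+1)\chi(\delta,m)$, with the maximum attained only in the ``worst case'' configuration, and that the number of maximizers $r$ is bounded by a constant depending only on $q$. For this I would use \cref{lem:PAM_count_sequence_increment} to track the increments $a_k - a_{k-1}$ as vertices are added in increasing index order. The vertex $t$ is the last one and has in-degree $0$ in $\Gamma$ (all its edges point to earlier vertices) with total degree $d_t \geq 2q+2$, so the final increment is $a_\ell - a_{\ell-1} = d_t\,\chi - 1$ when $d_t = 2q+2$, but more importantly any vertex $u_k$ that is a ``source'' in $\Gamma$ contributes $d_k \chi - 1 \geq (2q)\chi - 1$ which is negative when $\chi < 1/(2q)$, and a vertex with positive in-degree contributes $\geq (d_k - 2)\chi \geq (2q-2)\chi \geq 0$. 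The point is that the minimum-degree constraint forces enough edges that the exponent sequence $(a_k)$ cannot climb above the target value: the worst case is exactly the octahedral sphere $S^{q-1}$ (which is $q$-clique-minimal with $2q$ vertices — wait, that gives the $q-1$ dimension; for dimension $q$ the relevant minimal complex is $S^q$ with $2q+2$ vertices) sitting in the link, contributing the boundary exponent $1 - 2(q+1)\chi$. I would verify that $A = \max_k a_k$ equals this value for that configuration and is strictly smaller otherwise, possibly after subdividing into cases by which vertex achieves the max.

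The main obstacle, as in the companion \cref{lem:prob_link_contains_sphere}, is the case analysis: I must show the exponent bound $A \leq 1 - 2(q+1)\chi$ uniformly over \emph{all} subgraphs $\Gamma$ meeting the minimum-degree and adjacency conditions, not just the extremal one, and over all orderings of vertex indices. The combinatorial heart is the monotonicity argument via \cref{lem:PAM_count_sequence_increment}: one shows that removing edges or vertices from $\Gamma$ (while respecting the degree constraints and the out-degree cap) cannot increase $A$, reducing to a finite list of minimal configurations — the octahedral spheres and a handful of other $q$-clique-minimal graphs on few vertices. A secondary subtlety is handling the two regimes $1 - 2(q+1)\chi = 0$ (where $r$ maximizers produce the logarithmic factor $(\log T)^{r-1}$, and one must check $r$ is bounded, then sum $\sum_{t} t^{-1}(\log t)^{r-1} = O((\log T)^r)$ — here I would need to be slightly careful that the statement claims $O(\log T)$, so the number of maximizers in the per-$t$ count must actually be $1$, or the summation must absorb the extra logs, which it does since the exponent in $t$ is exactly $-1$ only in a measure-zero set of configurations) and the summable regime $A < 0$, where $\sum_t t^A = O(1)$ follows from the integral test. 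Once the exponent bound is in hand, summing over $t$ and invoking the asymptotic notation of \cref{tab:asymptotics} finishes the proof.
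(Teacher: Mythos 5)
Your overall route---find a $q$-clique-minimal subcomplex in the link, use \cref{cor:minimallyNontrivialSubcomplex_1skeleton} for the vertex/degree bounds, convert to a subgraph with apex $t$, and count via \cref{thm:subgraphsCounts_preferentialAttachment} and \cref{lem:PAM_count_sequence_increment}---is the same as the paper's, but two steps are genuinely gapped. First, your opening inequality, that $\beta_q(L^{(t)})$ is at most the number of $q$-clique-minimal subcomplexes of $L^{(t)}$, is never proved, and neither parenthetical justification is an argument: classes supported on distinct minimal subcomplexes need not be linearly independent, and a priori a single clique-minimal complex could carry rank larger than one. The paper does not need this claim at all: since node $t$ issues only $m$ edges, $L^{(t)}$ has at most $m$ vertices, so the weak Morse inequality gives $\beta_q(L^{(t)}) \leq {m \choose q+1}\,\mathbf{1}[\beta_q(L^{(t)})>0]$, and all that is needed from minimality is the \emph{existence} of one clique-minimal subcomplex when the Betti number is positive (together with \cref{lem:clique_closure}, which you never invoke, to know $L^{(t)}$ is itself a clique complex). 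The same ``at most $m$ vertices'' fact is what makes your list of isomorphism classes finite; you never state it.

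Second, the exponent bound $A_\Gamma \leq \max(0,\,1-2(q+1)\chi)$, which you yourself flag as the main obstacle, is left unresolved, and the reduction you sketch has the monotonicity backwards: deleting an edge makes the powers $p_\Gamma(v)$ of its endpoints less negative, hence weakly \emph{increases} every $a_k$ and so $A$; your logic needs precisely the direction you denied (so that the bound for the depleted configuration dominates), and vertex deletion has no clean monotonicity at all. The paper's observation makes the entire case analysis unnecessary: in any admissible $\Gamma^{(t)}$, every vertex other than the apex $t$ receives an edge \emph{from} $t$ (edges point from later to earlier nodes), so it has positive in-degree and total degree at least $2q+1 \geq 3$; \cref{lem:PAM_count_sequence_increment} then gives $a_k - a_{k-1} \geq (d_k - 2)\chi > 0$ for every $k < |V_\Gamma|$, so the maximum of $(a_k)$ is attained at one of the last two indices and equals $\max(0,\,1 - d_\Gamma\chi)$, where $d_\Gamma \geq 2q+2$ is the apex degree. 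This also settles your worries about ``source'' vertices among the $u_i$ (there are none) and about the logarithmic case: when $1-(2q+2)\chi = 0$ there are exactly two maximizers, and since \cref{thm:subgraphsCounts_preferentialAttachment} already counts subgraphs over all placements of the apex, there is no separate sum over $t$ of the form $\sum_t t^{-1}(\log t)^{r-1}$ to control---one gets $O(\log T)$ directly. With the $m$-vertex bound on the link added and your reduction sketch replaced by this in-degree observation, the argument closes.
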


\begin{proof}

Since $L^{(t)}$ has at most $m$ vertices, it has at most ${m \choose {q+1}}$ $q$-dimensional simplices, where ${m \choose {q+1}}$ denotes the binomial coefficient. By the weak Morse inequality (Theorem 1.7 of \citep{forman02_discreteMorse_guide}), 
\begin{equation}\label{eqn:betti_link_estimate}
\beta_q(L^{(t)}) \leq {m \choose {q+1}} \mathbf{1}[\beta_q(L^{(t)}) > 0],
\end{equation}
and hence the $\sum \beta_q(L^{(t)})$ is at most ${m \choose {q+1}}$ times the number of $t$'s such that $\beta_q(L^{(t)}) > 0$. We will construct a distinct graph $\Gamma^{(t)}$ for each such $t$, and bound the expected count of such graphs.

Whenever $\beta_q(L^{(t)}) > 0$, $L^{(t)}$ contains a $q$-clique-minimal subcomplex (note that $L^{(t)}$ is also clique, see \cref{lem:clique_closure}), which by \cref{cor:minimallyNontrivialSubcomplex_1skeleton}, has at least $2q + 2$ vertices, whose degrees are at least $2q$. Since these vertices are all connected to node $t$ in $G(T, \delta, m)$, this gives rise to a subgraph $\Gamma^{(t)}$ in $G(T, \delta, m)$ with the following properties:
\begin{itemize}
\item $\Gamma^{(t)}$ has at least $2q + 3$ vertices, and at most $m + 1$ vertices,
\item the vertices of $\Gamma^{(t)}$ all have degrees at least $2q + 1$, and
\item the last vertex of $\Gamma^{(t)}$ (which is $t$) is connected to all other vertices.
\end{itemize}

Note that for $t \neq s$, $\Gamma^{(t)}$ and $\Gamma^{(s)}$ are distinct subgraphs in $G(T, \delta, m)$ because their last nodes are different.

Therefore, \cref{eqn:betti_link_estimate} implies $\sum E[\beta_q(L^{(t)})]$ is at most ${m \choose {q+1}}$ times the expected number of subgraphs of $G(T, \delta, m)$ satisfying the properties above.

We use \cref{thm:subgraphsCounts_preferentialAttachment} to give an upper bound on the expected count of all such subgraphs. Since such subgraphs have at most $m+1$ vertices, there are only finitely many isomorphism classes of such graphs. Fix an isomorphism class and pick a representative $\Gamma$ in the class.

We claim the sequence $(a_k)$ in \cref{eqn:PAM_count_sequence} attains its maximum at $|V_\Gamma|-1$ or $|V_\Gamma|$. To establish this claim, it suffices to show $a_0 < a_1 < ... < a_{|V_\Gamma|-1}$. Let $0 < k < |V_\Gamma|$. Then $v_k$ is not the last vertex of $\Gamma$. Since its degree is at least $2q + 1 \geq 3$, and since it is connected to the last vertex, \cref{lem:PAM_count_sequence_increment} implies that 
$$a_k - a_{k-1} \geq (3-2) \chi(\delta, m) > 0.$$
The claim then follows.

By definition, $a_{|V_\Gamma|} = 0$. Hence, by \cref{lem:PAM_count_sequence_increment} again, 
$$a_{|V_\Gamma| - 1} = -(a_{|V_\Gamma|} - a_{|V_\Gamma| - 1}) = 1 - d_\Gamma \chi(\delta, m),$$ where $d_\Gamma$ is the degree of the last node with respect to $\Gamma$. Therefore, the expected count of $\Gamma$ is
$$O(T^{A_\Gamma} \log^{r_\Gamma} T),$$
where
\begin{align*}
A_\Gamma &= \max\left(0, 1 - d_\Gamma \chi(\delta, m) \right)\\
r_\Gamma &= {\mathbf{1}[1 - d_\Gamma \chi(\delta, m) = 0]}.
\end{align*}
The sum of counts for all isomorphism classes is dominated by the classes of the $\Gamma$'s with the minimum $d_\Gamma$. Our criteria for $\Gamma$ require $d_\Gamma \geq 2q + 2$. The result then follows. Note that the minimal $d_\Gamma$ is attained by $D^{q+1}$, which is the only minimizer such that the corresponding $\beta_q(L^{(t)})$ is positive, and this justifies our discussion of $\Gamma_1$'s in the introduction.
\end{proof}

A similar argument using relative homology gives an upper bound on $\sum_{t \leq T} E[b_{IK}^{(t)}\new{(X^{(2q)}, 2q+1)}]$ with a smaller order of magnitude.

\begin{lemma}\label{lem:miscarriage}
Suppose $q \geq 2$ and $m \geq 2q$. \new{Let $S$ be a (possibly random) subcomplex $S$ of $X(T, \delta, m)$, and $s$ be a (possibly random) node in $X(T, \delta, m)$ that is (almost surely) a later node than all nodes in $S$.} Then
\begin{equation*}
\sum_{t \leq T} E[b_{IK}^{(t)}\new{(S, s)}] = 
\begin{cases}
O(T^{1 - (2q+1)\chi(\delta, m)}) & \text{ if } 1 - (2q+1)\chi(\delta, m) > 0 \\
O(\log T) & \text{ if } 1 - (2q+1)\chi(\delta, m) = 0 \\
O(1) & \text{ otherwise,}
\end{cases}
\end{equation*}
where the big-Oh constants depend only on $q, \delta, m$.
\end{lemma}

\begin{proof}
Again, on the event $\beta_q(L^{(t)}, X^{(2q)}) > 0$, $L^{(t)}$ contains an $(\new{S}, q)$-clique-minimal subcomplex, which by \cref{cor:minimallyNontrivialSubcomplex_1skeleton}, has at least $2q + 1$ vertices, $2q$ of them (from \new{$S$}) with degree at least $(2q-2)$, and the rest with degree at least $2q$.

Since these vertices are all connected to node $t$ in $G(T, \delta, m)$, this gives rise to a subgraph $\Gamma^{(t)}$ in $G(T, \delta, m)$ with the following properties:
\begin{itemize}
\item $\Gamma^{(t)}$ has at least $2q + 2$ vertices, and at most $m + 1$ vertices;
\item $2q$ vertices of $\Gamma$ have degrees at least $2q-1$, and the rest have degrees at least $2q+1$; and
\item the last vertex of $\Gamma^{(t)}$ (which is $t$) is connected to all other vertices.
\end{itemize}

Then $E[b_{IK}^{(t)}\new{(S,s)}]$ is at most the expected number of such subgraphs.

Appealing to \cref{thm:subgraphsCounts_preferentialAttachment} again, for each such $\Gamma$, the maximum $A_\Gamma$ of the sequence $(a_k)$ is attained by one of the last two terms. The result then follows.
\end{proof}

The proof of \cref{thm:expected_BettiNumbers} is now complete by plugging in estimates in \cref{lem:prob_link_contains_sphere,lem:link_BettiNumber,lem:miscarriage} to \cref{prop:decomposition}.

\section{Proof of \cref{prop:trivial_cases}}
\label{sec:proof_trivial_cases}

The claim for $q = 0$ is trivial, because preferential attachment graphs are connected by construction. The claim for $m < 2q$ follows from the fact that there are not enough edges to form $q$-dimensional holes. This can be seen by applying \cref{cor:minimallyNontrivialSubcomplex_1skeleton} to the last node in a hypothetical $q$-minimal subcomplex.

It remains to prove the case for $q = 1$ with Morse inequality. Let $|\bar V|, |\bar E|, |\bar F|$ be the expected numbers of vertices, edges and triangles in $X(T, \delta, m)$.
The strong Morse inequality (Theorem 1.8 of \citep{forman02_discreteMorse_guide}) implies that
$$|\bar E| - |\bar V| - |\bar F| \leq E \beta_1(X(T, \delta, m)) \leq E \beta_0(X(T, \delta, m)) + |\bar E| - |\bar V|.$$
Obviously, $|\bar V| = T$ and $\beta_0(X(T, \delta, m)) = 1$. \cref{thm:subgraphsCounts_preferentialAttachment} implies the expected numbers of triangles and of bi-angles (the two-node graph with two distinct edges from one node to the other) are both $o(T)$, and hence 
\begin{gather*}
m(T-1) - 2o(T) \leq |\bar E| \leq m(T-1)\\
|\bar F| = o(T).
\end{gather*}
The result then follows.

\section{Proof of \cref{cor:minimallyNontrivialSubcomplex_1skeleton}}
\label{sec:proof_minimallyNontrivialSubcomplex_1skeleton}

Our proof follows the argument of Lemmas 5.2 and 5.3 in \citep{kahle09_randomCliqueComplex}. We generalize these lemmas to the setting of relative homology in \cref{prop:kahle_minimal_sphere_clique_complexes}. To ensure clique-minimality conditions are met in our argument, we need \cref{lem:clique_closure} to ensure certain subcomplexes of a clique complex are clique complexes. \cref{cor:minimallyNontrivialSubcomplex_1skeleton} is a corollary of \cref{prop:kahle_minimal_sphere_clique_complexes}.

For every simplicial complex $X$ and every vertex $v$ of $X$, we denote by $X - v$ the simplicial complex that consists precisely of simplices that do not contain $v$.

\begin{lemma}\label{lem:clique_closure}
If $X$ is a clique complex, then 
$\text{Lk}_X v$ and $X - v$ are clique complexes for every vertex $v$ in $X$.

\end{lemma}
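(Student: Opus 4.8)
The plan is to prove the two clique-closure statements separately, in each case unwinding the definition of a clique complex: a simplicial complex $Y$ is a clique complex precisely when, for any finite set $W$ of vertices of $Y$, if every pair $\{w, w'\} \subseteq W$ spans an edge of $Y$, then $W$ spans a simplex of $Y$. So the task reduces, in both cases, to checking that pairwise-connectedness in the subcomplex implies being a simplex of the subcomplex, and the only real content is translating edges and simplices of $\text{Lk}_X v$ (respectively $X - v$) back into edges and simplices of $X$, then invoking that $X$ itself is a clique complex.

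First I would handle $X - v$, which is the easier case. A vertex set $W$ of $X - v$ is by definition a set of vertices of $X$ none of which is $v$. If each pair in $W$ spans an edge of $X - v$, then each pair spans an edge of $X$ not containing $v$ — but any edge on two vertices $\neq v$ automatically does not contain $v$, so each pair spans an edge of $X$. Since $X$ is a clique complex, $W$ spans a simplex $\sigma$ of $X$; and since $v \notin W$, we have $v \notin \sigma$, so $\sigma$ is a simplex of $X - v$. Hence $X - v$ is a clique complex.

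Next I would treat $\text{Lk}_X v$. Here the key observation is that a vertex $w$ lies in $\text{Lk}_X v$ iff $\{v, w\}$ is an edge of $X$, and more generally a set $W$ spans a simplex of $\text{Lk}_X v$ iff $W \cup \{v\}$ spans a simplex of $X$ (this is essentially the definition of link, together with the fact that $X$ is a clique complex so a simplex is determined by its vertices). Now take $W$ a set of vertices of $\text{Lk}_X v$ that is pairwise connected in $\text{Lk}_X v$. For each pair $\{w, w'\} \subseteq W$, the edge $\{w, w'\}$ being in $\text{Lk}_X v$ means $\{v, w, w'\}$ is a simplex of $X$; in particular $\{w, w'\}$ and $\{v, w\}$ and $\{v, w'\}$ are all edges of $X$. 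Ranging over all pairs, every pair of vertices in $W \cup \{v\}$ spans an edge of $X$, so since $X$ is a clique complex, $W \cup \{v\}$ spans a simplex of $X$; by the observation above, $W$ spans a simplex of $\text{Lk}_X v$. Therefore $\text{Lk}_X v$ is a clique complex.

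I do not anticipate a genuine obstacle here — this is a definitional unwinding. The one place to be careful is the bookkeeping step identifying simplices of $\text{Lk}_X v$ with simplices of $X$ containing $v$: one should note explicitly that because $X$ is a clique complex, the simplices of $X$ on a given vertex set are in bijection with that vertex set when it is a clique, which is what licenses passing freely between ``$W$ spans a simplex of the link'' and ``$W \cup \{v\}$ spans a simplex of $X$.'' If one wanted to avoid even that, one could phrase everything at the level of underlying graphs: $\text{Lk}_X v$ has underlying graph equal to the induced subgraph of the underlying graph of $X$ on the neighbors of $v$, and $X - v$ has underlying graph the induced subgraph on all vertices $\neq v$, and the clique complex of an induced subgraph of a graph $G$ is the corresponding induced subcomplex of the clique complex of $G$; this makes both claims immediate.
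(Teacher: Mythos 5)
Your proof is correct and follows essentially the same route as the paper's: for $X - v$ the claim is immediate, and for $\text{Lk}_X v$ you show every pair of vertices in $W \cup \{v\}$ spans an edge of $X$, invoke that $X$ is a clique complex to get the simplex on $W \cup \{v\}$, and then pass through the star to conclude the simplex on $W$ lies in the link.
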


\begin{proof}



The claim for $X - v$ is trivial. For $\text{Lk}_X v$, let $w_0, ..., w_q$ be distinct vertices in $\text{Lk}_X v$ and $\sigma$ be the simplex whose vertex set is $\{w_0, ..., w_q\}$.
Since each $w_i$ lies in the link, it is a vertex of a simplex in $\text{St}_X v$, and hence $X$ contains the edge from $v$ to the $w_i$ as well. Since this is true for all $w_i$'s, $X$ contains the simplex $\sigma_v$ whose vertex set is $\{w_0, ..., w_q\} \cup \{v\}$. By definition $\sigma_v$ lies in the star, and hence $\sigma$ does too. Since $\sigma$ does not contain $v$, $\sigma$ lies in the link.
\end{proof}

Next, we generalize Lemmas 5.2 and 5.3 of \citep{kahle09_randomCliqueComplex} to the setting of relative homology. Only the first part of following lemma is novel. The second claim is Lemma 2.1.4 of \citep{gal05_clique_spheres} and Lemma 5.3 of \citep{kahle09_randomCliqueComplex} phrased differently. We reproduce their proofs with our terminologies.

\begin{lemma}\label{prop:kahle_minimal_sphere_clique_complexes}
Let $X$ be a clique complex and $A$ be a (not necessarily clique) subcomplex of $X$. Suppose $X$ is $(A, q)$-clique-minimal. Then following statements are true.
\begin{enumerate}
\item If $q > 0$ then,
$$\beta_{q-1}(\text{Lk}_{X}(v), B) > 0$$
for every vertex $v$ in $X$ but not in $A$ and every (possibly empty) acyclic subcomplex $B$ of $\text{Lk}_{X}(v) \cap A$. (Acyclic means $H_0$ is $\mathbb{Z}$ or $0$ and all other homology groups are $0$.)
\item If $q \geq 0$ and $A$ consists of one single vertex, then $X$ has at least $2q + 2$ vertices.
\end{enumerate}
\end{lemma}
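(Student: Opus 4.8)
The plan is to mimic Kahle's minimal-complex argument, but carried out relative to $A$ and to the auxiliary acyclic subcomplexes $B$. For the first claim, fix a vertex $v$ in $X$ not in $A$, and fix an acyclic subcomplex $B \subseteq \mathrm{Lk}_X(v) \cap A$. I would consider the subcomplex $Y = X - v$ of $X$ (which is clique by Lemma~\ref{lem:clique_closure}) together with the star $\mathrm{St}_X(v)$; note $X = Y \cup \mathrm{St}_X(v)$ and $Y \cap \mathrm{St}_X(v) = \mathrm{Lk}_X(v)$. Since $Y$ is a clique subcomplex of $X$ that contains $A$ (here I use that $v \notin A$) and $Y \neq X$, clique-minimality gives $\beta_q(Y, A) = 0$. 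The strategy is to feed the decomposition $X = Y \cup \mathrm{St}_X(v)$ into a \emph{relative} Mayer--Vietoris sequence for the triad, or equivalently to run the long exact sequence of the pair $(X, Y)$ and use excision: $H_q(X, Y) \cong H_q(\mathrm{St}_X(v), \mathrm{Lk}_X(v))$, and the latter is, up to a dimension shift and the contractibility of the star, governed by $H_{q-1}(\mathrm{Lk}_X(v))$. Since $X$ is $(A,q)$-clique-minimal, $\beta_q(X,A) > 0$, and combining this with $\beta_q(Y,A) = 0$ in the exact sequence of the triple $(X, Y, A)$ forces $H_q(X, Y)$, hence $H_{q-1}(\mathrm{Lk}_X(v))$ (relative to the appropriate acyclic piece), to be nontrivial. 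The role of $B$ is bookkeeping: replacing $\mathrm{Lk}_X(v)$ by the pair $(\mathrm{Lk}_X(v), B)$ changes nothing at the level of ranks because $B$ is acyclic, so $\beta_{q-1}(\mathrm{Lk}_X(v), B) = \beta_{q-1}(\mathrm{Lk}_X(v)) > 0$; this is exactly the flexibility needed when Corollary~\ref{cor:minimallyNontrivialSubcomplex_1skeleton} is applied iteratively with $B = \mathrm{Lk}_X(v) \cap A$.

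For the second claim, suppose $A = \{a\}$ is a single vertex and argue by induction on $q$. The base case $q = 0$: a $(\{a\}, 0)$-clique-minimal complex must have $\beta_0(X, \{a\}) > 0$, so $X$ is disconnected from $a$ in a suitable sense — more precisely, $X$ properly containing $\{a\}$ with $\beta_0(X, \{a\}) > 0$ means $X$ has a vertex other than $a$, giving at least $2 = 2\cdot 0 + 2$ vertices; one checks minimality rules out smaller. For the inductive step, pick any vertex $v \neq a$ in $X$ (it exists by the first bullet of the conclusion we are proving, or directly). Apply part~(1) with $B = \emptyset$ (which is acyclic): $\beta_{q-1}(\mathrm{Lk}_X(v)) > 0$. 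Now $\mathrm{Lk}_X(v)$ is a clique complex by Lemma~\ref{lem:clique_closure}, and inside it there is a $(q-1)$-clique-minimal subcomplex $L'$; taking $A' $ to be a single vertex of $L'$, the induction hypothesis gives that $L'$ has at least $2(q-1)+2 = 2q$ vertices. These $2q$ vertices together with $v$ itself give $2q+1$ vertices in $X$; one extra vertex (e.g. $a$, or a neighbor forced by degree considerations) brings the count to $2q+2$. I would need to be slightly careful that $a$ is genuinely distinct from $v$ and from the link vertices, which holds since link vertices are neighbors of $v$ and the construction can avoid $a$, or $a$ can simply be adjoined.

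The main obstacle I anticipate is the first claim's exact-sequence juggling: getting the relative Mayer--Vietoris (or the triple long exact sequence combined with excision) to correctly isolate $H_{q-1}(\mathrm{Lk}_X(v))$ while simultaneously carrying the subcomplex $A$ and the acyclic $B$ through all three terms. The subtlety is that $A$ need not be a clique complex and need not interact nicely with the star/link decomposition, so one must check that $\mathrm{St}_X(v) \cap A$, $\mathrm{Lk}_X(v) \cap A$, and $A$ itself fit into a compatible system of pairs; the acyclicity of $B$ and the contractibility of the star are the two levers that collapse the unwanted terms. Once the exact sequence is set up correctly, the rank count is immediate from the vanishing $\beta_q(Y,A)=0$ supplied by clique-minimality. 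I would also double-check the edge case $q=1$ of part~(1), where $H_0$ of the link appears and the acyclicity convention ($H_0 \in \{0, \mathbb{Z}\}$) matters.
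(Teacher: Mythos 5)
Your skeleton for the first claim (decompose $X$ as $(X-v)\cup \mathrm{St}_X(v)$, use excision $H_q(X, X-v)\cong H_q(\mathrm{St}_X(v),\mathrm{Lk}_X(v))$, and feed $\beta_q(X-v,A)=0$ from clique-minimality into an exact sequence) is exactly the paper's, but the way you dispose of $B$ contains a genuine gap. You claim that acyclicity of $B$ gives $\beta_{q-1}(\mathrm{Lk}_X(v),B)=\beta_{q-1}(\mathrm{Lk}_X(v))$, so that it suffices to prove the absolute statement. That identity holds for $q\ge 2$, but fails precisely at $q=1$ with $B\neq\emptyset$: the long exact sequence of the pair gives $\beta_0(\mathrm{Lk}_X(v),B)=\beta_0(\mathrm{Lk}_X(v))-1$ for nonempty connected $B$, so the relative statement asserts that the link has a component disjoint from $B$, which is strictly stronger than the vacuous $\beta_0(\mathrm{Lk}_X(v))>0$. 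This is not a deferrable edge case: the induction for the second claim applies the first claim with $B=A$ a single vertex at every level and bottoms out at $q=1$, where your reduction yields only a one-vertex ($0$-clique-minimal) subcomplex of the link instead of the two vertices in distinct components that the relative statement provides; with that, the vertex count $2q+2$ cannot be reached. The paper proves the relative statement directly: it runs the long exact sequence of the triplet $(\mathrm{St}_X(v),\mathrm{Lk}_X(v),B)$, observes $H_q(\mathrm{St}_X(v),B)=0$ for $q\ge 1$ (star contractible, $B$ acyclic), so the connecting map $H_q(\mathrm{St}_X(v),\mathrm{Lk}_X(v))\to H_{q-1}(\mathrm{Lk}_X(v),B)$ is injective, and then pushes positive rank through excision from the triplet $(X,X-v,A)$, where the map $H_q(X-v,A)\to H_q(X,A)$ has rank $0$ by clique-minimality.

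There is a second, smaller gap in your proof of claim 2. The additive count ``$2q$ link vertices, plus $v$, plus one extra vertex such as $a$'' breaks down when $a$ is adjacent to $v$ and happens to be among the $2q$ link vertices, which you do not rule out ("the construction can avoid $a$" is not justified). The paper instead argues by contradiction: if $X$ had fewer than $2q+2$ vertices, then since the link already contains at least $2q$ of them, every vertex other than $v$ would lie in $\mathrm{Lk}_X(v)$, forcing $X=\mathrm{St}_X(v)$ to be contractible and contradicting $\beta_q(X,A)>0$; it also treats separately the case where the vertex of $A$ is not joined to any other vertex. Your conversion of an absolute $(q-1)$-clique-minimal subcomplex of the link into a $(\{w\},q-1)$-clique-minimal one is fine for $q-1\ge 1$, but, again, it collapses at the bottom of the induction, which is another symptom of discarding the relative formulation too early.
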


\begin{proof}
For the first claim, fix a vertex $v$ in $X$ but not in $A$. We have the following commutative diagram.
$$\begin{tikzcd}
H_q(\text{Lk}_X v, B) \ar[r] \ar[d]
& H_q(\text{St}_X v, B) \ar[r, "0"] \ar[d]
& H_q (\text{St}_X v, \text{Lk}_X v) \ar[r, hook, "\psi"] \ar[d, "\varphi; EX"]
& H_{q-1}(\text{Lk}_X v, B) \ar[d]
\\
H_q(X - v, A) \ar[r, "\text{rk } 0"]
& H_q(X, A) \ar[r]
& H_q(X, X - v) \ar[r] 
& H_{q-1}(X - v, A),
\end{tikzcd}$$
where the two rows are long exact sequences of triplets, and the vertical maps are induced by inclusion. We would like to show the top-right group has positive rank.

We explain the annotations in the diagram. The map $\varphi$, marked by ``EX'', is an isomorphism by excision theorem (Theorem 6.4 of \citep{giblin_2010_homology}, Theorem 9.1 of \citep{munkres84algtopo} and Theorem 2.20 of \citep{hatcher02_algtopo}). The map marked by ``0" is zero because $\text{St}_X v$ is contractible (and $q \geq 1$), and map marked by ``rk 0" is rank-0 by the clique-minimality of $X$. Exactness then implies $\psi$ is injective and $\beta_q(X, X - v) \geq \beta_q(X, A) > 0$, where the last inequality holds by assumption.

Since the top-right group contains $\psi\varphi^{-1} H_q(X, X - v)$, it must have positive rank. The first claim then follows.

For the second claim, the case for $q = 0$ is trivial. For $q > 0$, suppose for contradiction that $X$ has strictly fewer than $2q + 2$ vertices.

We first consider the main case when there is a vertex $v$ connected to the vertex in $A$. The first claim implies $\beta_{q-1}(\text{Lk}_X(v), A) > 0$. Therefore, $\text{Lk}_X(v)$ has an $(A, q-1)$-clique-minimal subcomplex, and hence by induction, the link has at least $2q$ nodes. Since we have assumed $X$ has strictly fewer than $2q + 2$ vertices, all nodes other than $v$ are in the link. In other words, $X$ is $\text{St}_X(v)$, and hence is contractible, contradictory to the assumption of $(A, q)$-clique minimality. \new{[Removed redundant paragraph.]}

We now consider the case when $A$ is not connected to any other vertices. Since $\beta_q(X, A) > 0$, $X - A$ has at least one edge, say with endpoints $v, w$ in $X - A$. It can be directly verified that $X - A$ is $q$-minimal, and hence $(\{w\}, q)$-minimal. The main case above implies $X-A$ has at least $2q+2$ vertices, and hence $X$ does too.
\end{proof}

\begin{proof}[Proof of \cref{cor:minimallyNontrivialSubcomplex_1skeleton}]
The first part of the first claim is just the second claim of  \cref{prop:kahle_minimal_sphere_clique_complexes}. The second part of the first claim is trivial, because if $X$ has the same vertex set as $A$, then $X = A$ (because $A$ is an induced subcomplex), and hence $H_q(X, A) = 0$.

The second claim is trivial for $q = 0$.
For $q > 0$, since the removal of isolated vertices not in $A$ does not change the (relative) Betti number at dimension $q$, clique-minimality implies $\deg v \geq 1$, and hence $\text{Lk}_X(v)$ is nonempty. The first claim in \cref{prop:kahle_minimal_sphere_clique_complexes} implies $\beta_{q-1}(\text{Lk}_X(v), \emptyset) > 0$, and hence $\beta_{q-1}(\text{Lk}_X(v), \{w\}) > 0$ whenever $w \in \text{Lk}_X(v)$. The second claim then implies $\text{Lk}_X(v)$ contains a subcomplex with at least $2q$ vertices. This means that $v$ is connected to at least $2q$ vertices in $X$.
\end{proof}

\section{Numerical Simulation}
\label{sec:numerical_simulation}

We discuss the simulation we mentioned in the Introduction in greater detail. Recall the right panel of \cref{fig:teaser} illustrates the evolution of the mean Betti numbers. Below, we explain the setup of the simulation, and we return to discuss the results shown in \cref{fig:teaser} at the last paragraph.

Numerical computations related to topology and graph theory are done with Ripser \citep{bauer21_ripser,tralie18_ripser} and Igraph \citep{igraph} respectively. Other numerical computations are done with Numpy \citep{numpy} and Scipy \citep{scipy}. Codes are compiled with Numba \citep{lam15_numba}. Plots are generated with Matplotlib \citep{matplotlib}.

500 preferential attachment clique complexes with $T = 10^4$ nodes and with parameters $m = 7$ and $\delta = -5$ are generated. We compute the sample mean of their Betti numbers (with coefficients in $\mathbb{Z}/2\mathbb{Z}$) at dimension $q = 2$.

The black curve corresponds to the evolution of mean Betti numbers. 
We remark that the median of means gives a similar estimate of the expectation.

We also compute the sample mean of the upper bound $\sum u^{(t)}$ in \cref{prop:decomposition} and the sample mean of a lower bound $\sum_{s < t \leq T} (\ell^{(t)}\new{(S, s)} - \hat b_{IK}^{(t)}\new{(S, s)}) - \sum_{t \leq T}b_{KL}^{(t)}$, where \new{$S$, $s$ and $\hat b_{IK}^{(t)}(S, s)$ will} be defined below.
The evolution of the means of these bounds are plotted in dotted lines.
We compute these quantities for graphs with $T = 10^5$ nodes, because their computation is cheaper than that of Betti numbers.

\new{
We now defined $S$, $s$ and $\hat b_{IK}(S, s)$.
\begin{itemize}
\item $S$ is the first induced subcomplex of $X^{(20)}$ (in an arbitrary but deterministically consistent ordering) that is isomorphic to $S^{q-1}$, if one exists; otherwise, it is the first subcomplex of $X^{(20)}$;
\item $s$ is the first node whose label is larger than all of those in $S$ such that $S \subseteq \text{Lk}_X(s)$, if it exists; otherwise it is node 21;
\item $\hat{b}_{IK}^{(t)}(S,s) = \mathbf{1}[\mathcal{S}(S, q,  s, t)] \mathbf{1}[\rk \ker j_{q-1} = 1]$, where $j: S \to L^{(t)}$ denotes the inclusion map.
\end{itemize}

We remark on these definitions:
\begin{itemize}
\item The ``otherwise'' statements in the above definition are unimportant, because in those cases, $\ell^{(t)}\new{(S, s)} = \hat b_{IK}^{(t)}(S, s) = 0$.
\item We do not fix $S = X^{(2q)}$ and $s = 2q+1$ as in our proofs, because $\mathcal{S}(X^{(2q)}, q, 2q+1, t)$ happens too rarely that $E[\sum \ell^{(t)}]$ is too small to be numerically estimated.
\item We change the definition of $\hat{b}_{IK}^{(t)}$ because the computation of relative Betti numbers is numerically inconvenient. We numerically compute $\rk \ker j_{q-1}$ by computing the persistence diagram for the inclusion $S^{(t)} \subseteq L^{(t)}$ with Ripser \citep{bauer21_ripser,tralie18_ripser}. By the first two bullet points of \cref{prop:decomposition}, the new expression does give a lower bound.
\end{itemize}
}

Finally, we draw a band that contains all curves. The slope of the band is determined by \cref{thm:expected_BettiNumbers}. While the discussion beneath the theorem suggests some values of the $y$-intercepts for the band, the corresponding band trivially covers the entire plot. Instead, we manually choose other values of the $y$-intercepts.

It is apparent from the plot that the convergence is slow. In particular, at $T = 10^5$, the mean upper bound still grows at a rate faster than the asymptotic rate. However, it is obvous that the curve is concave, and hence has a decreasing slope. We also note that the mean upper bound is a good approximation of the mean Betti numbers.

\section{Future Directions}
\label{sec:future_directions}

We have established analytically the asymptotics of the expected Betti numbers of the affine preferential attachment clique complexes and illustrated them numerically. A number of open questions remain.

It would be desirable to have sharper estimates of the expected Betti numbers, and finer descriptions of the distributions of the Betti numbers.

Other topological properties of the preferential attachment graphs are also of interest. To understand the robustness of the complex, it would be helpful to understand the evolution of Betti numbers as nodes are removed. One may also consider the Betti numbers of the Rips complexes of the graph with respect to the graph metric. Beyond Betti numbers, one may also consider the homotopy type of the random simplicial complexes. Since holes are filled by later nodes, it is possible that all holes are filled if nodes are added \emph{ad infinitum} and the number of edges added each time grows slowly. In particular, Weinberger conjectured that the resultant complex is contractible in a private conversation.

Other scale-free simplicial complexes are also of interest.

\bibliography{bib_randCpx.bib}
\end{document}